\newtheorem{thm}{Theorem}[section]
\newtheorem{prop}[thm]{Proposition}
\newtheorem{cor}[thm]{Corollary}
\numberwithin{equation}{section}
\theoremstyle{definition}
\newtheorem{rem}[thm]{Remark}
\newtheorem{ex}[thm]{Example}
\newtheorem{quest}[thm]{Question}
\def\NN{{\mathbb N}}
\def\RR{{\mathbb R}}
\def\QQ{{\mathbb Q}}
\DeclareSymbolFont{bbold}{U}{bbold}{m}{n}
\DeclareSymbolFontAlphabet{\mathbbold}{bbold}
\def\one{\mathbbold{1}}
\newcommand{\zs}
\newcommand{\Iff}{if\textcompwordmark f\xspace}
\newcommand{\Int}{\operatorname{Int}\nolimits}
\newcommand{\term}[1]{{\textit{\textbf{#1}}}}   
\DeclareMathOperator{\supp}{supp}
\begin{document}

\title[The countable sup property]{The countable sup property for lattices of continuous functions}

\author{M.~Kandi\' c}
\address{Fakulteta za Matematiko in Fiziko,
Univerza v Ljubljani,
Jadranska ulica 19,
SI-1000 Ljubljana,
Slovenija }
\email{marko.kandic@fmf.uni-lj.si}

\author{A.~Vavpeti\v c}
\address{Fakulteta za Matematiko in Fiziko,
Univerza v Ljubljani,
Jadranska ulica 19,
SI-1000 Ljubljana,
Slovenija }
\email{ales.vavpetic@fmf.uni-lj.si}

\keywords{vector lattices, continuous functions, countable sup property, chain conditions, strictly positive functionals}
\subjclass[2010]{46B42, 06E10, 46A40, 46E25.}

\thanks{The first author acknowledges the financial support from the
  Slovenian Research Agency (research core funding No. P1-0222). The second author acknowledges the financial support from the Slovenian Research Agency (research core funding No. P1-0292 and J1-7025).}

\begin{abstract}
In this paper we find sufficient and necessary conditions under which vector lattice $C(X)$ and its sublattices $C_b(X)$, $C_0(X)$ and $C_c(X)$ have the countable sup property.
It turns out that the countable sup property is tightly connected to the countable chain condition of the underlying topological space $X$.
We also consider the countable sup property of $C(X\times Y)$. Even when both $C(X)$ and $C(Y)$ have the countable sup property it is possible that $C(X\times Y)$ fails to have it. For this construction one needs to assume the continuum hypothesis.  In general, we present a positive result in this direction and also address the question when $C(\prod_{\lambda\in\Lambda} X_\lambda)$ has the countable sup property. Our results can be understood as vector lattice theoretical versions of results regarding products of spaces satisfying the countable chain condition. We also present new results for general vector lattices that are of an independent interest.
\end{abstract}

\maketitle

\section{Introduction}

In topology, when one deals with continuous functions, there are two possibilities. One can work either by open sets or by nets (generalized sequences). Although dealing with nets is maybe computationally easier, one needs to be cautious since there is a variety of different types of nets and one can often make errors.  When the topology of a given space is metrizable (more general first-countable), the sequential nature of the space enables us to work by sequences instead of nets.
Since order convergence in vector lattices is also defined through nets, one would also like to pass from nets to sequences, of course, if possible. In the setting of vector lattices this notion is called the \term{countable sup property}. It plays an important role in the recent research
in vector and Banach lattices. For example, in \cite{Adeeb:17} it was used to prove that every function in $C(\mathbb R^m)$ is the order limit of an order convergent sequence of piecewise affine functions. Next, in \cite{GTX} authors used it to prove that in some Banach function spaces over  $\sigma$-finite measure spaces convex  Koml\'os sets are norm bounded. Last but not least, in \cite{LiChen} authors proved that the universal completion of a vector lattice with a weak unit and with the countable sup property also has the countable sup property. With this result they proved uo-completeness of the universal completion of some vector lattices (see \cite[Theorem 2.10]{LiChen}).
Recall that a vector lattice $E$ is said to be uo-complete whenever every uo-Cauchy net in $E$ uo-converges in $E$. These days uo-convergence plays a very important role in the research of vector lattices. Although uo-convergence is very exciting on its own, its value shows through its applications in Mathematical finance.
For general results on uo-convergence and its unbounded norm version we refer the reader to \cite{GaoX:14,Gao:14,KMT,GTX,LiChen}. For applications of uo-convergence and its techniques to Mathematical finance we refer the reader to \cite{GLX,GLX2}.

In this paper our interest is the countable sup property itself. Although we present some new general results which also extend some results from \cite{Adeeb:17} and \cite{LiChen}, our main concern is the countable sup property for vector lattices of continuous functions on a given topological space. The paper is structured as follows. In \Cref{prelim} we introduce notation and basic notions needed throughout the text. In \Cref{chain conditions} we
introduce different chain conditions on a topological space $X$ and prove that the existence of a strictly positive functional on $C_b(X)$ implies the weakest of them. In \Cref{cs general properties} we connect the countable sup property of $C(X)$ to chain conditions from \Cref{chain conditions}. It turns out that the countable chain condition of a topological space $X$ implies that $C(X)$ has the countable sup property and that, in general, they are not equivalent. However, they are equivalent when $X\in T_{3\frac 1 2}$. Also, for a metric space $X$, the countable sup property of $C(X)$ is equivalent to separability of $X$. Along the way we extend two results from \cite{Adeeb:17} and \cite{LiChen}. In \Cref{ccc0} we prove that vector lattices $C_c(X)$ and $C_0(X)$ simultaneously have the countable sup property or simultaneously fail to have it. In the last section we consider the vector lattice $C(X\times Y)$. It is possible for both $C(X)$ and $C(Y)$ to have the countable sup property while $C(X \times Y)$ lacks it. This follows under continuum hypothesis from Galvin's example \cite{Galvin} and \Cref{C(X) C_b(X)}. This example also leads to an example of an extremally disconnected compact Hausdorff space $X$ such that $C(X)$ has the countable sup property while $C(X\times X)$ lacks
it (see e.g. \cite{Roy:89}). 
We also prove that whenever $C(X)$ has the countable sup property and $Y$ is separable, then $C(X\times Y)$ has the countable sup property. This result can be considered as a vector lattice version of \cite[Theorem 3.3]{Wiscamb:69}. Last but not least, we also prove that  $C(\prod_{\lambda\in\Lambda} X_\lambda)$ has the countable sup property whenever for each finite family $\Lambda_0\subseteq \Lambda$ the space  $C(\prod_{\lambda\in\Lambda_0} X_\lambda)$ has the countable sup property. Again, this can be considered as a vector lattice version of \cite[Theorem 2.2]{Roy:89}.


\section{Preliminaries}\label{prelim}

Throughout the paper, if not otherwise stated, vector lattices are assumed to be Archimedean. A vector $x$ of a vector lattice $E$ is said to be \term{positive} if $x\geq 0.$ The set of all positive vectors of $E$ is denoted by $E_+$. A vector $e\in E_+$ is said to be a \term{unit} if for every $x\in E$ there is some $\lambda\geq 0$ such that $|x|\leq \lambda e$. A positive vector $e\in E$ is said to be a \term{weak unit} if $|x|\wedge e=0$ implies $x=0$. A vector sublattice $F$ is \term{order dense} in $E$ if for each $x\in E_+$ there is $y \in F_+$ satisfying $0<y\leq x.$ If for each $x\in E$ there is some $y\in F$ with $x\leq y$, then $F$ is said to be \term{majorizing} in $E$. Order dense sublattices are always \term{regular}, i.e., if $x_\alpha\downarrow 0$ in $F$, then $x_\alpha\downarrow 0$ in $E$. It is easy to see that  properties of being an order dense, majorizing or a regular sublattice are transitive relations. Vector lattice $E$ is said to satisfy the \term{countable sup property} whenever every nonempty subset possessing a supremum contains a countable subset possessing the same supremum. The countable sup property is equivalent to the following fact:  for each net $(x_\alpha)$ in $E$ that satisfies $0\leq x_\alpha\uparrow x$ there is an increasing sequence $\alpha_n$ such that $0\leq x_{\alpha_n}\uparrow x$ (see e.g. \cite[Theorem 23.2]{Luxemburg:71}).
If $F$ is a regular sublattice of a vector lattice with the countable sup property, it is easy to see that $F$ has the countable sup property as well.
A positive functional $\varphi$ on $E$ is said to be \term{strictly positive} if $\varphi(x)=0$ for some  vector $x\in E_+$ implies $x=0$.
If there exists a strictly positive functional on $E$, then $E$ has the countable sup property (see e.g. \cite[Theorem 1.45]{Aliprantis:03}).

Let $E$ be a vector lattice and let $E^\delta$ be its order (Dedekind) completion. Then $E$ is an order dense and majorizing sublattice in $E^\delta$, moreover these properties characterize $E^\delta$. If $F$ is a regular sublattice of $E$, then $F^\delta$ is a regular sublattice of $E^\delta$ (see e.g. \cite[Theorem 2.10]{GTX}).
For the unexplained terminology about vector lattices we refer the reader to \cite{Aliprantis:03,Aliprantis:06}.

Since in this paper we will be mainly concerned with the vector lattice $C(X)$, we recall the most needed topological facts. A space $X$ is said to satisfy $T_{3\frac 1 2}$-separation axiom if $X$ is Hausdorff and for each closed set $F$ in $X$ and $x\notin F$ there is a continuous function $f$ on $X$ with $f(x)=1$ and $f\equiv 0$ on $F$. We write $X\in T_{3\frac 1 2}$ for short when $X$ satisfies $T_{3 \frac 1 2}$-separation axiom. For a function $f$ on $X$, the set $\mathcal Z(f)$ of zeros of $f$ is called the \term{zero set} of $f$. The closure of  $X\setminus \mathcal Z(f)$ is called the \term{support} of $f$ and is denoted by $\supp f.$ A subset $U$ of $X$ is said to be a \term{cozero set} whenever there exists a function $f\in C(X)$ such that $\mathcal Z(f)=X\setminus U$. The constant $1$ function is denoted by $\one$.
The order on $C(X)$ is defined pointwise. Clearly $\one$ is a unit in $C_b(X)$ while it is only a weak unit in $C(X)$. Also, when $X$ is not compact, then $\one \notin C_0(X)$.

%



\section{Chain conditions in topological spaces}\label{chain conditions}

Countability in Topology has a special role. One of the first properties one encounters is \term{separability} of a topological space $X$, i.e., the existence of a countable dense set in $X$. Although the notion of separability is very natural and intuitive, in Functional analysis there is an abundance of nonseparable Banach spaces which are of great importance. As an illustration, consider a compact Hausdorff space $K$. It is well-known that $C(K)$ is separable \Iff $K$ is metrizable (see e.g. \cite[Proposition 2.1.8]{Meyer-Nieberg:91}). For real or abstract analysis sometimes the full force of separability is not needed. In those cases, one can develop richer theory under relaxed assumptions however through more involved and deeper proofs. One can relax the notion of separability through the so-called ``chain conditions" imposed on a topological space $X$ which are introduced below.

A topological space $X$ is said to satisfy
\begin{itemize}
  \item \term{Shanin's condition} if every point-countable family of open sets is countable, i.e. every point is a member of only countable many members of a family,
  \item \term{calibre $\aleph_1$} if every uncountable family of nonempty open sets contains an uncountable subfamily with a nonempty intersection,
  \item \term{Knaster's condition} or \term{property K} if every uncountable set of nonempty open sets contains an uncountable subset in which no two elements are disjoint,
  \item \term{countable chain condition} if  every family of pairwise disjoint open subsets of $X$ is countable.
\end{itemize}

We will write for short that $X$ is CCC whenever $X$ satisfies the countable chain condition.
In general, we have the following chain of implications regarding chain conditions.
$$\textrm{separability} \Rightarrow \textrm{Shanin's condition}\Rightarrow \textrm{calibre }\aleph_1 \Rightarrow \textrm{property K} \Rightarrow\textrm{CCC}$$
 and none of them can be reversed. When $X$ is metrizable, CCC implies separability (see e.g. \cite[Theorem 1]{VR:59}). Hence, in the metrizable case, all conditions that were introduced above are equivalent.

Countable chain condition is also called \term{Suslin's condition} and it originates from the so-called \term{Suslin's problem} about totally ordered sets posed by Suslin \cite{Suslin}.
Suslin asked whether there exists a totally ordered set $R$ which is not order isomorphic to the real line $\mathbb R$ and satisfies the following properties:
\begin{enumerate}
  \item $R$ does not have a least nor a greatest element;
  \item between any two elements there is another;
  \item every nonempty bounded set has a supremum and an infimum;
  \item every collection of pairwise disjoint nonempty open intervals in $R$ is countable.
\end{enumerate}
Totally ordered set which satisfies (1)-(4) and is not order isomorphic to $\mathbb R$ is called a \term{Suslin line}. \term{Suslin's hypothesis} says that there are no Suslin lines.  Solovay and Tennenbaum \cite{Solovay} proved that Suslin's hypothesis is undecidable in ZFC. However, Suslin's hypothesis is true \cite{Solovay} if one assumes the negation of the continuum hypothesis and \term{Martin's axiom} \cite{Martin} which says that no compact Hausdorff space is a union of fewer than $2^{\aleph_1}$ nowhere dense sets. Regarding Suslin's hypothesis we refer the reader also to \cite{Jech} and \cite{Tennenbaum}.

So far we have seen that separability condition and  CCC are the strongest and the weakest chain condition we introduced so far, respectively.
The CCC condition can be relaxed as follows. The space $X$ is said to satisfy \term{cozero CCC} or \term{CCC for cozero sets} if any collection of pairwise disjoint cozero sets is countable. Obviously CCC implies CCC for
cozero sets. We will see later that a space $X$ which satisfies CCC for cozero sets does not necessary satisfy CCC (see \Cref{strictly positive -> co zero CCC} and \Cref{CCC cozero strictly positive functional}).

%

 In general, even compact Hausdorff spaces do not satisfy CCC.  The example of such space can be found in \cite[p. 116-117]{Steen:70}.
 On the other hand compact groups satisfy Shanin's condition which is stronger than CCC. This  follows from the fact that every Cantor cube satisfies Shanin's condition and from the remarkable Ivanovski\u\i--Vilenkin--Kuzminov theorem which says that compact groups are continuous images of Cantor cubes. More generally, Tka\v cenko proved that every $\sigma$-compact group satisfies CCC. For details see \cite{Tod:97}. Compact groups share another feature which is of more use to us within the framework of this paper.  If $G$ is a compact Hausdorff group, then the normalized Haar measure $\lambda$ on $G$ induces a strictly positive functional $f\mapsto \int_G fd\lambda$ on $C(G)$. In general, a compact Hausdorff space $K$ does not admit strictly positive finite Borel measures.
 When they do, then such measures induce strictly positive functionals on $C(K)$.

\begin{prop}\label{strictly positive -> co zero CCC}
If there exists a strictly positive functional on $C_b(X)$, then $X$ satisfies CCC for cozero sets.
\end{prop}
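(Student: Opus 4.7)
The plan is to convert each cozero set into a positive, bounded continuous function of norm at most one whose cozero set is exactly the given one, and then exploit the pointwise ``disjointness'' of such functions to turn a pairwise disjoint family into a summation bounded by $\one$, at which point the strictly positive functional forces countability.

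First, let $\{U_\lambda\}_{\lambda\in\Lambda}$ be a family of pairwise disjoint cozero sets and, for each $\lambda$, pick $f_\lambda\in C(X)$ with $\mathcal Z(f_\lambda)=X\setminus U_\lambda$. Setting
\[
g_\lambda=\frac{|f_\lambda|}{1+|f_\lambda|}
\]
gives a function $g_\lambda\in C_b(X)$ with $0\le g_\lambda\le \one$ and $g_\lambda(x)>0$ \Iff $x\in U_\lambda$. The key observation is that disjointness of the $U_\lambda$'s implies that at every point $x\in X$ \emph{at most one} of the values $g_\lambda(x)$ is nonzero; this suffices (even though the supports $\supp g_\lambda$ may overlap) because for any finite subset $\{\lambda_1,\dots,\lambda_k\}\subseteq\Lambda$ the pointwise sum
\[
g_{\lambda_1}+g_{\lambda_2}+\cdots+g_{\lambda_k}
\]
is a continuous function satisfying $g_{\lambda_1}+\cdots+g_{\lambda_k}\le \one$.

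Now let $\varphi$ be a strictly positive functional on $C_b(X)$. Since every $g_\lambda$ is a nonzero positive element of $C_b(X)$, we have $\varphi(g_\lambda)>0$, so we can partition
\[
\Lambda=\bigcup_{n\in\NN}\Lambda_n,\qquad \Lambda_n=\{\lambda\in\Lambda:\varphi(g_\lambda)\ge 1/n\}.
\]
Applying $\varphi$ (which is positive, hence monotone) to the displayed inequality $\sum_{i=1}^k g_{\lambda_i}\le \one$ for any $\lambda_1,\dots,\lambda_k\in\Lambda_n$ yields
\[
\frac{k}{n}\le\sum_{i=1}^k\varphi(g_{\lambda_i})\le \varphi(\one),
\]
so $k\le n\,\varphi(\one)$. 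Hence each $\Lambda_n$ is finite, and $\Lambda$ is countable.

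There is no real obstacle; the proof is essentially a one-step argument. The only small point to check is that disjoint cozero sets translate into functions that are pointwise disjoint, and it is precisely the pointwise disjointness (rather than disjointness of supports) that is required in order to bound finite sums by $\one$ and thereby apply $\varphi$.
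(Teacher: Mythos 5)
Your proof is correct and follows essentially the same route as the paper: replace each cozero set by a $[0,1]$-valued function vanishing exactly off it, note that pairwise disjointness makes every finite sum bounded by $\one$, and then use $\varphi(\one)<\infty$ to show each level set $\{\lambda:\varphi(g_\lambda)\ge 1/n\}$ is finite. Your version is slightly more explicit (the $|f|/(1+|f|)$ normalization and the clean bound $k\le n\,\varphi(\one)$), but the argument is the same.
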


\begin{proof}
 Let $\varphi$ be a strictly positive functional on $C_b(X)$ and let $(U_\lambda)$ be a family of pairwise disjoint cozero sets. Then for each $\lambda$ there exists a function $f_\lambda\colon X\to [0,1]$ such that $f_\lambda$ is nonzero precisely on $U_\lambda$.
Hence, if $f$ is a finite sum of some functions in $(f_\lambda)$, then $0\leq f\leq \one.$  For each $n\in\mathbb N$ define $A_n=\{f_\lambda:\; \varphi(f_\lambda)\geq \frac{1}{n}\}.$ Suppose $A_m$ is infinite for some $m\in\mathbb N$. Pick $k\cdot m$ distinct functions in $A_m$ and denote their sum by $f_0$. Then $0\leq f_0\leq \one$ implies
$\varphi(\one)\geq \varphi(f_0)\geq k.$ This contradicts the fact that $\varphi$ is a functional. Hence, each set $A_n$ is finite from where it follows that the family $(f_\lambda)$ is countable.
\end{proof}

By applying \Cref{C(X) C_b(X)} we conclude that although CCC or CCC for cozero sets are weaker notions than separability, yet they are still strong enough to provide the right tool for order analysis on the vector lattice $C_b(X)$.

The following example shows that there exists a Hausdorff space $X$ such that $C_b(X)$ admits a strictly positive functional; yet $X$ does not satisfy CCC. This example shows that CCC for cozero sets is weaker than CCC.

\begin{ex}\label{CCC cozero strictly positive functional}
Let us define the topology $\tau$ on $\RR^2$ as follows:
a point $(x,y)\in(\RR\times\QQ)\cup(\QQ\times\RR)$ has Euclidean neighborhoods, but
a point $(x,y)\in(\RR\setminus\QQ)^2$ 
has
$$\{((x-r,x+r)\cap(\RR\setminus\QQ))\times\{y\}:\; r>0\}$$ for its fundamental system. The resulting space is denoted by $X$. Because $\tau$ is stronger
than Euclidean topology, continuous functions on $X$ separate the points. In particular, $X$  is Hausdorff.
Because $\{\RR\setminus\QQ\times\{y\}:\; y\in\RR\setminus\QQ\}$ is an uncountable collection of disjoint
open sets, the space $X$ is not CCC.

We claim that if $f\colon X\to \mathbb R$ is a nonzero $\tau$-continuous function, then $f$ is nonzero on some Euclidean ball. First, we may assume $f\geq 0$. Suppose $f$ is nonzero at a point $(\widetilde x,\widetilde y)\in X$ and denote $c:=f(\widetilde x,\widetilde y)>0$. If $(\widetilde x,\widetilde y)\in (\RR\times\QQ)\cup(\QQ\times\RR)$, then by continuity $f$ is nonzero on some Euclidean neighborhood of $(\widetilde x,\widetilde y)$. Suppose now $\widetilde x$ and $\widetilde y$ are both irrational. Again, by continuity of $f$ we can find $\delta>0$ such that for each $x\in (\widetilde x-\delta,\widetilde x+\delta)\cap (\mathbb R\setminus \mathbb Q)$ we have $f(x,\widetilde y)\geq \tfrac c 2.$ Pick $x_\star\in (\widetilde x-\delta,\widetilde x+\delta)\cap \mathbb Q$. Since $(x_\star,\widetilde y)$ has Euclidean neighborhoods, continuity of $f$ implies that there is $\delta_\star>0$ such that
for each $(x,y)\in K((x_\star,\widetilde y),\delta_\star)$ we have $f(x_\star,\widetilde y)\geq \tfrac c 4.$ This proves the claim.

We claim that there exists a strictly positive functional on $C_b(X)$. Let $(q_n)$ be one of the enumerations of the countable set $\mathbb Q^2$. Then the well-defined mapping $\varphi\colon C_b(X)\to \mathbb R$ given  by $\varphi(f)=\sum_{n=1}^\infty \tfrac{1}{2^n}f(q_n)$  defines a positive functional on $C_b(X)$.
If $f\in C_b(X)$ is nonzero, then $f(q_n)>0$ for some $q_n\in \mathbb Q^2$. Hence, $\varphi(f)\geq \tfrac{1}{2^n}f(q_n)>0$, so that $\varphi$ is strictly positive.
%
%
\end{ex}

Although CCC for cozero sets is weaker than CCC, by \Cref{CCC = countabe sup} they coincide when $X\in T_{3\frac 12}$.
Similar argument as in \Cref{CCC cozero strictly positive functional} shows that $C_b(X)$ admits strictly positive functionals when $X$ is separable.

%
%
%

\section{The countable sup property}\label{cs general properties}

As we already observed, if $F$ is a regular sublattice of $E$ and $E$ has the countable sup property, then $F$ itself has the countable sup property. If $F$ is not regular in $E$, then directly from the definition it is not clear whether $F$ still has the countable sup property since  $0\leq x_\alpha\uparrow x$ in $F$ does not necessary imply $0\leq x_\alpha\uparrow x$ in $E$. However, there is a way around through the following theorem whose proof can be found in \cite[Theorem 29.3]{Luxemburg:71}.
Recall that in \Cref{prelim} we assumed that all vector lattices are Archimedean.

\begin{thm}\label{countable sup disjoint system}
A vector lattice $E$ has the countable sup property \Iff every disjoint system of positive vectors which is bounded from above is countable.
\end{thm}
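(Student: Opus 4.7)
The plan is to prove the two implications separately. The forward direction uses the countable sup property on finite partial sums; the reverse uses a transfinite construction to extract an uncountable disjoint family that will contradict the hypothesis.

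For $(\Rightarrow)$, let $\{x_\lambda\}_{\lambda\in\Lambda}$ be a disjoint positive system bounded above by $u\in E_+$, and consider the upward directed net of partial sums $y_F=\sum_{\lambda\in F}x_\lambda=\bigvee_{\lambda\in F}x_\lambda$ indexed by finite $F\subseteq\Lambda$. After ensuring the supremum $y=\sup_F y_F$ exists (by passing to the Dedekind completion $E^\delta$ when needed), the countable sup property yields an increasing sequence $(F_n)$ of finite subsets with $y_{F_n}\uparrow y$. Setting $\Lambda_0=\bigcup_n F_n$ gives a countable index set. For any $\mu\in\Lambda\setminus\Lambda_0$, the disjointness relation $x_\mu\wedge y_{F_n}=0$ combined with the identity $x_\mu\wedge\sup_n y_{F_n}=\sup_n(x_\mu\wedge y_{F_n})$ forces $x_\mu\wedge y=0$; as $x_\mu\leq y$, this gives $x_\mu=0$, so the nonzero members of the system are all indexed by $\Lambda_0$.

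For $(\Leftarrow)$, assume every bounded disjoint positive system in $E$ is countable and take $0\leq x_\alpha\uparrow x$. Supposing for contradiction that no increasing sequence $(\alpha_n)$ satisfies $x_{\alpha_n}\uparrow x$, the goal is to build an uncountable disjoint family $\{d_\xi\}_{\xi<\omega_1}\subseteq[0,x]$. First I observe that the countability hypothesis transfers from $E$ to $E^\delta$ via order density: any uncountable bounded disjoint system $\{y_\lambda\}$ in $E^\delta$ yields one in $E$ by selecting, for each $y_\lambda>0$, some $x_\lambda\in E$ with $0<x_\lambda\leq y_\lambda$. Then, working in the Dedekind complete $E^\delta$, I proceed by transfinite recursion: at stage $\xi<\omega_1$ let $B_\xi$ be the band generated by $\{d_\eta:\eta<\xi\}$ with band projection $P_\xi$ onto $B_\xi^\perp$, choose $\alpha_\xi$ with $P_\xi(x_{\alpha_\xi})>0$, and set $d_\xi=P_\xi(x_{\alpha_\xi})$. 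The resulting family is disjoint and bounded by $x$.

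The main obstacle is to show that the recursion does not stall before $\omega_1$: if some $\xi^*<\omega_1$ satisfies $P_{\xi^*}(x_\alpha)=0$ for every $\alpha$, then every $x_\alpha$ lies in $B_{\xi^*}$, and hence $x\in B_{\xi^*}$. Since $B_{\xi^*}$ is the principal band generated by $e_{\xi^*}:=\bigvee_{\eta<\xi^*}d_\eta$, one has $x=\sup_n(ne_{\xi^*}\wedge x)=\sup_{n,\eta<\xi^*}(nd_\eta\wedge x)$, a countable supremum. Combining this with the directedness of the index set of $(x_\alpha)$ and the fact that each $d_\eta$ arose as a projection of some $x_{\alpha_\eta}$, one should be able to extract a countable subfamily of $(x_\alpha)$ whose supremum equals $x$, contradicting the standing assumption. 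Carrying out this final extraction, together with the careful transfer of supremum computations between $E$ and $E^\delta$, is the delicate technical step.
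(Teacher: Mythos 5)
The paper never proves this statement: it is quoted verbatim from \cite[Theorem 29.3]{Luxemburg:71}, so your attempt has to be measured against that classical argument. Your forward direction has the right idea but already leans on an unjustified ingredient: to apply the countable sup property to the net $y_F\uparrow y$ you pass to $E^\delta$, which requires knowing that $E^\delta$ inherits the countable sup property from $E$. That inheritance is a nontrivial theorem in its own right (\cite[Theorem 32.9]{Luxemburg:71}), and the standard route to it is precisely the disjoint-system characterization you are trying to prove, so as written this half is circular or at best incomplete. You cannot simply stay in $E$ either, because $\sup_F y_F$ genuinely need not exist there (countably many disjoint bumps in $C[0,1]$ already show this), so this direction needs either a non-circular justification of the transfer to $E^\delta$ or a different argument inside $E$.

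The serious gap is in the reverse direction, and it is exactly the step you defer as ``delicate''. If the recursion stalls at a countable $\xi^*$, you learn only that every $x_\alpha$ lies in the band $B_{\xi^*}$ generated by $\{d_\eta\}_{\eta<\xi^*}$ with $d_\eta\le x_{\alpha_\eta}$. This is far too weak to extract a countable subfamily of $(x_\alpha)$ with supremum $x$: from $x=\sup_{n,\eta}(x\wedge nd_\eta)$ you only get $x\wedge nd_\eta\le nx_{\alpha_\eta}$, and the factor $n$ cannot be removed, since an upper bound of $\{x_{\alpha_\eta}\}$ dominates each $d_\eta$ but need not dominate $x\wedge 2d_\eta$. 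Worse, the recursion can stall at stage $1$ with $B_1=E$: take $x_\alpha=(1-\frac1n)\one$ in $L^\infty[0,1]$, where the first chosen $x_{\alpha_0}$ is already a weak unit; your construction then outputs a single index and no usable information, so the ``final extraction'' would have to reprove the whole theorem on the band $B_{\xi^*}$. The classical proof supplies exactly the quantitative control your scheme lacks: for each $n$ one uses the components $C_{\alpha,n}$ of $x$ in the bands generated by $(x_\alpha-(1-\frac1n)x)^+$, which forces $x_\alpha\ge(1-\frac1n)C_{\alpha,n}$ on the corresponding band; a maximal disjoint system of such components (countable by hypothesis) gives countably many indices with $\sup_\alpha C_{\alpha,n}=x$ for each $n$, and letting $n\to\infty$ --- this is where the Archimedean hypothesis enters --- shows the union of these countable index sets works. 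Without an idea of this kind your transfinite construction cannot be completed, so the proposal has a genuine gap rather than a missing routine verification.
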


Now it easy to see that sublattices of vector lattices with the countable sup property have the countable sup property. The following question appears naturally.

\begin{quest}\label{question csup sublattice}
  Suppose $F$ is a vector sublattice of a vector lattice $E$ and suppose $F$ has the countable sup property. Does $E$ have the countable sup property?
\end{quest}

The following example reveals that the answer can be negative  even if $F$ is an order dense ideal in $E$.

\begin{ex}\label{example E}
Let $\Omega$ be an uncountable set and $\ell_\infty(\Omega)$ be the vector lattice of all bounded functions on $\Omega$.
Let $E$ be the vector lattice of all functions in $\ell_\infty(\Omega)$ with a countable support. Then $E$ is an order dense ideal in $\ell_\infty(\Omega)$, $E$ has the countable sup property while $\ell_\infty(\Omega)$ does not have it.
\end{ex}

Note that in \Cref{example E} the vector lattice $E$ has the countable sup property and contains  an uncountable disjoint system of positive vectors $\{\chi_\omega:\; \omega\in \Omega\}$. This is impossible when a given vector lattice possesses weak units.

\begin{prop}\label{countable sup weak unit}
 A vector lattice $E$ with a weak unit has the countable sup property \Iff every disjoint system of positive vectors is countable.
\end{prop}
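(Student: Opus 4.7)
The plan is to deduce both directions from \Cref{countable sup disjoint system}, using the weak unit only for the nontrivial direction. Throughout, a disjoint system of positive vectors is understood to consist of nonzero pairwise disjoint vectors; otherwise one could pad any system with zero vectors and the statement would be vacuously false.

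The backward implication is immediate: if every disjoint system of positive vectors in $E$ is countable, then in particular every such system that happens to be bounded above is countable, so \Cref{countable sup disjoint system} yields the countable sup property.

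For the forward implication, fix a weak unit $e\in E_+$ and assume $E$ has the countable sup property. Let $(x_\lambda)_{\lambda\in\Lambda}$ be a disjoint system of positive (nonzero) vectors. I would pass to the truncated system $(y_\lambda)_{\lambda\in\Lambda}$ with $y_\lambda:=x_\lambda\wedge e$. Two observations:
\begin{itemize}
  \item $y_\lambda\ge 0$ and $y_\lambda\le e$, so the system is bounded from above.
  \item For $\lambda\ne\mu$, one has $y_\lambda\wedge y_\mu\le x_\lambda\wedge x_\mu=0$, so the system is still pairwise disjoint. Moreover, since $e$ is a weak unit and $x_\lambda>0$, the defining property $|x_\lambda|\wedge e=0\Rightarrow x_\lambda=0$ forces $y_\lambda=x_\lambda\wedge e>0$.
\end{itemize}
Thus $(y_\lambda)_{\lambda\in\Lambda}$ is a disjoint system of positive nonzero vectors bounded above by $e$. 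By \Cref{countable sup disjoint system}, the index set $\Lambda$ is countable, and hence so is the original system $(x_\lambda)$.

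There is no real obstacle here; the only conceptual point is the use of the weak unit to truncate an arbitrary disjoint system to a bounded one while preserving disjointness and nontriviality. \Cref{example E} already shows the weak unit hypothesis cannot be dropped, since $E$ there has the countable sup property yet admits the uncountable disjoint system $\{\chi_\omega:\omega\in\Omega\}$.
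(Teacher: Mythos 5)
Your proposal is correct and follows essentially the same route as the paper: the backward direction is immediate from \Cref{countable sup disjoint system}, and the forward direction truncates the disjoint system by the weak unit $e$ to obtain an order bounded disjoint system, then uses the weak unit property to pass back to the original system. The only cosmetic difference is that you make the nonzero-vector convention explicit and observe $x_\lambda\wedge e>0$ directly, whereas the paper phrases the same step as ``$x_\lambda\wedge e=0$ for all but countably many $\lambda$''.
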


\begin{proof}
Suppose $E$ has the countable sup property. Let $e$ be a weak unit in $E$. Choose a disjoint system $(x_\lambda)_{\lambda\in\Lambda}$ of positive vectors in $E$. Then $(x_\lambda\wedge e)_{\lambda\in\Lambda}$ is an order bounded disjoint system of positive vectors in $E$ as well. Since $E$ has the countable sup property,  the system $(x_\lambda\wedge e)_{\lambda\in\Lambda}$ is countable. Hence, $x_\lambda\wedge e=0$ for all but countably many $\lambda\in \Lambda.$ Since $e$ is a weak unit, we have $x_\lambda=0$ for all but countably many $\lambda\in \Lambda.$

The converse implication is clear by \Cref{countable sup disjoint system}.
\end{proof}

Now we are able to provide some positive answers to \Cref{question csup sublattice}.
 Suppose $E$ is a vector lattice which contains a sublattice $F$ with the countable sup property. Then $E$ has the countable sup property in the following cases:
\begin{itemize}
  \item $E=F^\delta$ is the order completion of $F$;
  \item $F$ has a weak unit and $E=F^u$ is the universal completion of $F$.
\end{itemize}

For the proofs of these facts we refer the reader to \cite[Theorem 32.9]{Luxemburg:71} and \cite[Lemma 2.9]{LiChen}, respectively.
In the following theorem we extend the results above to a more general setting.

\begin{thm}\label{weak unit countable sup whole lattice}
Let $F$ be an order dense vector sublattice of a vector lattice $E$. If $F$ has the countable sup property, then $E$ has the countable sup property in the following cases:
\begin{enumerate}
\item [(a)] $F$ is majorizing in $E$.
\item [(b)] $F$ has a weak unit.
\end{enumerate}
\end{thm}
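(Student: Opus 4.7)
The plan is to reduce both cases to the disjoint-system criterion of \Cref{countable sup disjoint system}, and in case (b) to its weak unit refinement \Cref{countable sup weak unit}, by transferring a disjoint system from $E$ to one in $F$ via order density. The key ingredient is that if $(x_\lambda)_{\lambda\in\Lambda}$ is a disjoint positive family in $E$, then for each $\lambda$ with $x_\lambda>0$ order density supplies some $y_\lambda\in F$ with $0<y_\lambda\leq x_\lambda$, and the family $(y_\lambda)$ is automatically disjoint since $y_\lambda\wedge y_{\lambda'}\leq x_\lambda\wedge x_{\lambda'}=0$.

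For part (a), I would start with an order bounded disjoint system $(x_\lambda)_{\lambda\in\Lambda}$ in $E_+$, say $x_\lambda\leq v\in E$. Because $F$ is majorizing in $E$, pick $u\in F$ with $v\leq u$. Choosing $y_\lambda\in F$ as above for every $\lambda$ with $x_\lambda>0$, the disjoint family $(y_\lambda)\subseteq F_+$ is bounded above by $u\in F$, so by the countable sup property of $F$ and \Cref{countable sup disjoint system} it is countable. Hence only countably many $x_\lambda$ are nonzero, and \Cref{countable sup disjoint system} gives the countable sup property of $E$.

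For part (b), the first step is to check that a weak unit $e$ of $F$ is still a weak unit in $E$. Indeed, if $x\in E$ satisfies $|x|\wedge e=0$ and $x\neq 0$, order density produces $y\in F$ with $0<y\leq |x|$; then $y\wedge e\leq |x|\wedge e=0$, contradicting that $e$ is a weak unit in $F$. Having $e$ as a weak unit in $E$, \Cref{countable sup weak unit} reduces the countable sup property of $E$ to countability of arbitrary disjoint positive families in $E$, with no order bound assumed. Taking an arbitrary disjoint $(x_\lambda)\subseteq E_+$ and extracting $y_\lambda\in F$ as before, one obtains a disjoint positive family in $F$; since $F$ has the countable sup property and the weak unit $e$, another application of \Cref{countable sup weak unit} (this time inside $F$) forces $(y_\lambda)$, and hence $(x_\lambda)$, to be countable.

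The main subtlety lies in case (b): a disjoint family in $E$ need not be order bounded in $F$, so the bare disjoint-system criterion applied directly inside $F$ would be insufficient. It is precisely the weak unit hypothesis, together with the transfer of the weak unit from $F$ to $E$ through order density, that upgrades the criterion to the unbounded version needed here. Case (a), by contrast, is essentially routine once one notes that a majorizer in $F$ can be chosen for any bound coming from $E$.
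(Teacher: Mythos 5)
Your proof is correct, but it follows a genuinely different route from the paper's. You work entirely at the level of disjoint systems: order density lets you choose, for each nonzero member $x_\lambda$ of a disjoint system in $E$, some $y_\lambda\in F$ with $0<y_\lambda\le x_\lambda$, and disjointness passes down since $y_\lambda\wedge y_{\lambda'}\le x_\lambda\wedge x_{\lambda'}=0$; in case (a) the majorizing hypothesis supplies an upper bound inside $F$ so that \Cref{countable sup disjoint system} applies, while in case (b) you first transfer the weak unit from $F$ to $E$ and then invoke \Cref{countable sup weak unit} on both sides to dispense with order bounds altogether. The paper instead passes to Dedekind completions: for (a) it shows $F^\delta$ is an order dense majorizing ideal in $E^\delta$, hence $F^\delta=E^\delta$, and uses that the countable sup property survives order completion; for (b) it first treats the case where $F$ is an ideal in $E$ (using $x_\lambda\wedge u$ and $(x_\lambda\wedge nu)\uparrow_n x_\lambda$) and then reduces the general case to this one via $F^\delta\subseteq E^\delta$. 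Your argument is more elementary and self-contained, needing only the two disjoint-system criteria and the definitions of order dense and majorizing; the paper's argument is heavier but records the structural identity $E^\delta=F^\delta$ and ties the theorem to the previously cited special cases $E=F^\delta$ and $E=F^u$. Your closing remark correctly identifies the one genuine subtlety, namely that in case (b) the induced disjoint system in $F$ need not be order bounded, which is exactly why the weak unit version of the criterion is indispensable there.
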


\begin{proof}
(a) Since $F$ is regular in $E$, we conclude that $F^\delta$ is  regular in $E^\delta.$ Also, order density of $F$ in $E$ implies that $F^\delta$ is order dense in $E^\delta$. Since $F^\delta$ is order complete, by \cite[Theorem 1.40]{Aliprantis:03} $F^\delta$ is an ideal in $E^\delta$. Since $F^\delta$ is also majorizing in $E^\delta$, we conclude $E^\delta=F^\delta.$ Now we apply the fact that the countable sup property is preserved under passing to order completions. Hence, $E$ has the countable sup property. 

(b) Let $u$ be a weak unit in $F$. Then order density of $F$ in $E$ implies that $u$ is also a weak unit in both $E$ and $E^\delta$.

Suppose first that $F$ is an ideal in $E$. Let $(x_\lambda)_{\lambda\in\Lambda}$ be a disjoint system of positive vectors in $E$. Then $(x_\lambda\wedge u)_{\lambda\in\Lambda}$ is a disjoint system of positive vectors in $F$. Since $F$ has the countable sup property, $x_\lambda\wedge u=0$ for all but countably many $\lambda\in\Lambda$. Since $(x_\lambda\wedge nu)\uparrow_n x_\lambda$ in $E$, we have $x_\lambda=0$ for all but countably many $\lambda\in \Lambda$. \Cref{countable sup weak unit} implies that $E$ has the countable sup property.

For the general case, consider order completions $F^\delta$ and $E^\delta$ of $F$ and $E$, respectively. As in (a) we argue that $F^\delta$ is an order dense ideal in $E^\delta.$ Since the countable sup property is preserved under order completions, the first part of the proof implies that $E^\delta$ has the countable sup property. Now, $E$ as a sublattice of $E^\delta$ also has the countable sup property. Finally we conclude that $E$ has the countable sup property. 
\end{proof}

It should be noted here that the vector lattice $E$ in \Cref{example E} does not have weak units and is not majorizing in $\ell_\infty(\Omega)$.

Suppose $E$ is a vector lattice with the property that each disjoint system of positive vectors in $E$ is countable. In view of \Cref{countable sup weak unit} it is reasonable to suspect that $E$ has a weak unit. However, the normed vector lattice
$c_{00}$ of all eventually zero sequences does not have weak units, yet each disjoint system of positive vectors in $c_{00}$ is countable. In Banach lattices this property implies the existence of weak units. We even have a stronger result.

\begin{thm}
Let $E$ be a completely metrizable locally solid vector lattice.
Every disjoint system of positive vectors of $E$ is countable \Iff $E$ has a weak unit and the countable sup property.
\end{thm}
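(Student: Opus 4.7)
The reverse direction is immediate from \Cref{countable sup weak unit}: a weak unit combined with the countable sup property makes every disjoint system of positive vectors countable. For the forward direction, assume every disjoint system of positive vectors in $E$ is countable. The countable sup property is then free from \Cref{countable sup disjoint system}, since the hypothesis is strictly stronger than countability of order bounded disjoint systems. The entire task is to produce a weak unit.

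My plan is to combine Zorn's lemma with a summation trick. Assuming $E \neq \{0\}$, Zorn yields a maximal disjoint family $(x_\lambda)_{\lambda \in \Lambda}$ of strictly positive vectors, which by hypothesis is countable; enumerate it as $(x_n)_{n \in \mathbb{N}}$. The completely metrizable locally solid structure provides a monotone, translation-invariant F-norm $\|\cdot\|$ generating the topology, with respect to which $E$ is complete. By continuity of scalar multiplication, choose $\lambda_n > 0$ with $\|\lambda_n x_n\| < 2^{-n}$. The partial sums $s_N = \sum_{n=1}^N \lambda_n x_n$ are $\|\cdot\|$-Cauchy and order-increasing, so they converge in norm to some $u \in E$; using that the positive cone is closed in a locally solid vector lattice, passing to the limit in $s_M - s_N \geq 0$ as $M \to \infty$ gives $u \geq s_N$ for every $N$, and in particular $u \geq \lambda_n x_n$ for each $n$. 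To verify that $u$ is a weak unit, suppose $x \in E_+$ satisfies $x \wedge u = 0$; then $0 \leq x \wedge \lambda_n x_n \leq x \wedge u = 0$ forces $x \wedge x_n = 0$ for all $n$. Were $x > 0$, adjoining $x$ to the family $\{x_n\}$ would produce a strictly larger disjoint system of strictly positive vectors, contradicting maximality. Hence $x = 0$, and $u$ is indeed a weak unit.

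The main obstacle, and the place where the two topological hypotheses enter nontrivially, is the passage from a countable maximal disjoint system to an actual weak unit through the convergent series $\sum \lambda_n x_n$. This step relies crucially on the interplay between completeness (so the telescoping Cauchy estimate actually delivers a limit in $E$) and local solidness (so the F-norm is monotone, which is what makes the triangle inequality control partial sums, and so the positive cone is closed, which is what lets the limit dominate each $\lambda_n x_n$). Without either ingredient, the maximal disjoint system could exist yet no single vector would majorize all its members.
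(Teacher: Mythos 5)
Your proposal is correct and follows essentially the same route as the paper: Zorn's lemma yields a countable maximal disjoint system, suitable scalings make the partial sums Cauchy, completeness together with closedness of the positive cone produces a limit dominating every term, and maximality of the system shows that limit is a weak unit. The only cosmetic difference is that you package metrizability as a translation-invariant F-norm, whereas the paper works directly with a nested local basis of solid neighborhoods satisfying $V_{n+1}+V_{n+1}\subseteq V_n$; these are interchangeable, and the paper additionally treats the finite case of the maximal system separately, which your enumeration glosses over but which is trivial.
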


\begin{proof}
Assume that every disjoint system of positive vectors of $E$ is countable. By \Cref{countable sup disjoint system} it follows that $E$ has the countable sup property. To prove that $E$ has a weak unit, pick a maximal disjoint system $\mathcal F$ of nonzero positive vectors. Such systems exist by Zorn's lemma. By the assumption $\mathcal F$ is countable.

Suppose first that $\mathcal F=(f_n)_{n\in\mathbb N}$.
Since the topology of $E$ is metrizable, there exists a local basis $(U_n)$ of zero consisting of solid sets such that $U_{n+1}\subseteq U_n$ for each $n\in\mathbb N.$

We claim that there exists a local basis $(V_n)$ of zero consisting of solid sets such that $V_{n+1}+V_{n+1}\subseteq V_n$ for each $n\in\mathbb N.$ We first define $V_1=U_1$. Suppose that $V_n$ is already defined for some $n\in\mathbb N$. Since addition is continuous in $E$, there exist solid neighborhoods $S$ and $S'$ of zero with $S+S'\subseteq V_n.$ Then $V_{n+1}:=S\cap S'\cap U_{n+1}$ satisfies $V_{n+1}+V_{n+1}\subseteq V_n.$ Since for each $n\in\mathbb N$ we have $V_n\subseteq U_n$, the family $(V_n)$ is a local basis for zero.

For each $n\in \mathbb N$ find $\lambda_n>0$ such that $f_n\in \lambda_n V_n$ and define $e_n:=\frac{f_n}{\lambda_n}\in V_n$. For each $n\in\mathbb N$ we define $s_n=e_1+\cdots+e_n.$
We claim that $(s_n)$ is Cauchy in $E$. To see this, pick a neighborhood $V$ of zero. Then there is $n_0\in \mathbb N$ such that $V_{n_0}\subseteq V.$ If $n>m\geq n_0,$ then
$$s_n-s_m=e_{m+1}+\cdots+e_n\in V_{n-(n-m)}=V_m\subseteq V_{n_0}\subseteq V.$$ Since $E$ is complete the sequence $(s_n)$ converges to some element $e$. Since the cone of $E$ is closed by \cite[Theorem 2.21]{Aliprantis:03}, we have $0\leq s_n\leq e$ for each $n\in \mathbb N$.

Suppose $x\wedge e=0$. Then for each $n\in \mathbb N$ we have
$$0\leq x\wedge e_n\leq x\wedge s_n\leq x\wedge e=0.$$
Maximality of $(e_n)$ implies $x=0$; hence $e$ is a weak unit in $E$.

If $\mathcal F$ is finite, then we define the vector $e=\sum_{f\in \mathcal F}f.$ Again, maximality of $\mathcal F$ implies that $e$ is a weak unit in $E$.

The converse statement follows from \Cref{countable sup weak unit}.
\end{proof}

Although the countable sup property of a vector lattice $C_b(X)$ is a purely  order theoretical notion, it can be connected to a particular chain condition of $X$.

\begin{prop}\label{C(X) C_b(X)}
For a topological space $X$ the following statements are equivalent: \begin{enumerate}
\item[(a)] $C(X)$ has the countable sup property.
\item[(b)] $C_b(X)$ has the countable sup property.
\item[(c)] $X$ satisfies CCC for cozero sets.
\end{enumerate}
\end{prop}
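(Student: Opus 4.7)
The plan is to close the loop $(a)\Rightarrow(b)\Rightarrow(c)\Rightarrow(a)$, exploiting the fact that $\mathbf{1}$ is a unit in $C_b(X)$ and a weak unit in $C(X)$, together with the characterizations of the countable sup property via disjoint systems (Theorem~\ref{countable sup disjoint system} and Proposition~\ref{countable sup weak unit}).

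For $(a)\Rightarrow(b)$, I would simply invoke the fact, stated just after Theorem~\ref{countable sup disjoint system}, that sublattices of vector lattices with the countable sup property inherit the property: since $C_b(X)$ is a vector sublattice of $C(X)$, this is immediate.

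For $(b)\Rightarrow(c)$, I would argue contrapositively. Suppose $X$ admits an uncountable family $(U_\lambda)_{\lambda\in\Lambda}$ of pairwise disjoint nonempty cozero sets. For each $\lambda$ pick $g_\lambda\in C(X)$ with $\mathcal Z(g_\lambda)=X\setminus U_\lambda$, and set $f_\lambda=(|g_\lambda|\wedge\mathbf 1)\in C_b(X)$. Then each $f_\lambda$ is nonzero, $0\le f_\lambda\le\mathbf 1$, and the supports of $f_\lambda$ and $f_\mu$ are disjoint when $\lambda\ne\mu$, so $f_\lambda\wedge f_\mu=0$. Thus $(f_\lambda)_{\lambda\in\Lambda}$ is an uncountable order bounded disjoint system in $C_b(X)_+$, which by Theorem~\ref{countable sup disjoint system} contradicts the countable sup property of $C_b(X)$.

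For $(c)\Rightarrow(a)$, I would note that $\mathbf 1$ is a weak unit in $C(X)$, so by Proposition~\ref{countable sup weak unit} it suffices to show that every disjoint system of positive vectors in $C(X)$ is countable. Given such a system $(f_\lambda)_{\lambda\in\Lambda}$ with all $f_\lambda\ne 0$, let $U_\lambda=\{x\in X:f_\lambda(x)>0\}=X\setminus\mathcal Z(f_\lambda)$. Each $U_\lambda$ is a nonempty cozero set; moreover the relation $f_\lambda\wedge f_\mu=0$ for $\lambda\ne\mu$ forces $U_\lambda\cap U_\mu=\emptyset$, so in particular the map $\lambda\mapsto U_\lambda$ is injective. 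By (c) the disjoint family $(U_\lambda)_{\lambda\in\Lambda}$ is countable, hence $\Lambda$ is countable, completing the proof.

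No step looks genuinely difficult here; the only point to be careful about is in $(b)\Rightarrow(c)$, where one must truncate $g_\lambda$ to land inside $C_b(X)$ and thereby produce an \emph{order bounded} disjoint system (plain boundedness of the original $g_\lambda$ is not given), so that Theorem~\ref{countable sup disjoint system} can be applied.
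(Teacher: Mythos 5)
Your proof is correct, and the two essential constructions (disjoint cozero sets $\leftrightarrow$ order bounded disjoint systems of positive functions) are exactly the ones the paper uses for (a)$\Rightarrow$(c) and (c)$\Rightarrow$(a). The organizational difference is in how the implication (b)$\Rightarrow$(a) is obtained: the paper proves (a)$\Leftrightarrow$(b) directly by observing that $C_b(X)$ is an order dense ideal of $C(X)$ with a weak unit and invoking \Cref{weak unit countable sup whole lattice}, whereas you close the cycle (a)$\Rightarrow$(b)$\Rightarrow$(c)$\Rightarrow$(a), so that (b)$\Rightarrow$(a) comes for free through (c). Your route is slightly more elementary and self-contained: it needs only the disjoint-system characterizations (\Cref{countable sup disjoint system} and \Cref{countable sup weak unit}) and avoids the order-completion machinery behind \Cref{weak unit countable sup whole lattice}; the paper's route, in exchange, showcases a transfer principle that is reused elsewhere. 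Your explicit appeal to \Cref{countable sup weak unit} in (c)$\Rightarrow$(a) also makes precise a step the paper leaves implicit. Two small points of hygiene: in (b)$\Rightarrow$(c) you say the \emph{supports} of $f_\lambda$ and $f_\mu$ are disjoint, but with the paper's convention the support is the closure of the cozero set, and these closures need not be disjoint --- what you actually have (and all you need for $f_\lambda\wedge f_\mu=0$) is disjointness of the cozero sets themselves; and your insistence on truncating $g_\lambda$ to get an order bounded system is the right instinct, matching the paper's choice of $[0,1]$-valued functions.
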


\begin{proof}
Since $C_b(X)$ is an order dense ideal in $C(X)$, the equivalence between (a) and (b) follows from \Cref{weak unit countable sup whole lattice}.

(a)$\Rightarrow$(c) Let $(U_\lambda)$ be a family of disjoint cozero sets in $X$. For each $\lambda$ there exists a nonnegative function $f_\lambda\colon X\to [0,1]$ such that $f_\lambda$ is nonzero precisely on $U_\lambda.$ Hence, $(f_\lambda)$ is a family of pairwise disjoint functions in $C(X)$ which is bounded by above by $\one$. Countable sup property of $C(X)$ implies that $(f_\lambda)$ is countable.

(c)$\Rightarrow$(a) Suppose $(f_\lambda)$ is a family of nonnegative pairwise disjoint functions. For each $\lambda$ define $U_\lambda=f_\lambda^{-1}((0,\infty))$. Then $(U_\lambda)$ is a family of pairwise disjoint cozero sets in $X$. By the assumption this family is countable.
\end{proof}

If $X$ is CCC, then $X$ satisfies CCC for cozero sets, so that  by \Cref{C(X) C_b(X)} the lattice $C(X)$ has the countable sup property.
The space $X$ from \Cref{CCC cozero strictly positive functional} is an example of a space which is not CCC, yet $C_b(X)$ has the countable sup property.
If $X\in T_{3\frac 1 2}$, then $X$ is CCC \Iff $X$ satisfies CCC for cozero sets.
%

\begin{thm}\label{CCC = countabe sup}
For a topological space $X\in T_{3\frac 1 2}$ the following assertions are equivalent.
\begin{enumerate}
\item[(a)] $X$ is CCC.
\item[(b)] $X$ satisfies CCC for cozero sets.
\item[(c)] $C(X)$ has the countable sup property.
\end{enumerate}
If $C_b(X)$ admits a strictly positive functional, then (a), (b) and (c) hold.
\end{thm}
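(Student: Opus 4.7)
The plan is to observe that the equivalence (b)$\Leftrightarrow$(c) is already handed to us by \Cref{C(X) C_b(X)}, and that (a)$\Rightarrow$(b) is immediate, since every cozero set is open, so any pairwise disjoint family of cozero sets is in particular a pairwise disjoint family of open sets, which must be countable by CCC. The only substantive direction is (b)$\Rightarrow$(a), and this is where the hypothesis $X\in T_{3\frac12}$ is used in an essential way.

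For (b)$\Rightarrow$(a) I would argue by contrapositive. Assuming $X$ is not CCC, pick an uncountable pairwise disjoint family $(U_\lambda)_{\lambda\in\Lambda}$ of nonempty open sets. The task is to shrink each $U_\lambda$ to a nonempty cozero subset $V_\lambda\subseteq U_\lambda$, since disjointness of the $V_\lambda$'s is then inherited from the $U_\lambda$'s and produces an uncountable disjoint family of cozero sets, contradicting (b). For each $\lambda$ choose a point $x_\lambda\in U_\lambda$. Since $X\in T_{3\frac12}$ and $F_\lambda:=X\setminus U_\lambda$ is closed with $x_\lambda\notin F_\lambda$, there is $f_\lambda\in C(X)$ with $f_\lambda(x_\lambda)=1$ and $f_\lambda\equiv 0$ on $F_\lambda$. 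The cozero set $V_\lambda:=\{x\in X:\,f_\lambda(x)\neq 0\}$ then satisfies $x_\lambda\in V_\lambda\subseteq U_\lambda$, so it is nonempty and contained in $U_\lambda$. The family $(V_\lambda)$ is therefore an uncountable pairwise disjoint family of cozero sets.

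For the final clause, if $C_b(X)$ admits a strictly positive functional, then \Cref{strictly positive -> co zero CCC} gives (b) directly, and the equivalence just proved then yields (a) and (c).

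The only potentially subtle point is the passage from $T_{3\frac12}$-separation of a point from a closed set to a nonempty cozero subset of the given open neighborhood; but this is precisely the content of the separation axiom applied to $x_\lambda$ and $F_\lambda=X\setminus U_\lambda$, so no real obstacle arises. Note also that the implication (b)$\Rightarrow$(a) genuinely needs some form of separation: \Cref{CCC cozero strictly positive functional} shows that in a merely Hausdorff space, (b) can hold while (a) fails.
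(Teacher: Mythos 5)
Your proposal is correct and is essentially the paper's argument: the paper also dispatches (a)$\Rightarrow$(b) as obvious, gets (b)$\Rightarrow$(c) from \Cref{C(X) C_b(X)}, and closes the cycle by using the $T_{3\frac12}$ axiom to produce, inside each member of a disjoint open family, a nonzero $[0,1]$-valued continuous function vanishing off that set. The only cosmetic difference is that the paper phrases this last step as (c)$\Rightarrow$(a) via a bounded disjoint family of functions, while you phrase it as (b)$\Rightarrow$(a) via the corresponding cozero sets; your handling of the final clause through \Cref{strictly positive -> co zero CCC} is also the intended one.
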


\begin{proof}
That (a) implies (b) is obvious and that (b) implies (c) follows from \Cref{C(X) C_b(X)}. To see that (c) implies (a), pick  a family $(U_\lambda)$ of nonempty disjoint open sets in $X$. For each $\lambda$ we find a nonzero  function $f_\lambda\colon X\to [0,1]$ such that $f_\lambda\equiv 0$ on $X\setminus U_\lambda.$ Then $(f_\lambda)$ is a family of pairwise disjoint positive functions which are bounded by $\one$. The countable sup property of $C(X)$ implies that $(f_\lambda)$ is countable.
%
\end{proof}

Since metrizable spaces are $T_{3\frac 1 2}$, the following result immediately follows from \Cref{CCC = countabe sup}.

\begin{cor}\label{separable=CCC}
A metrizable space $X$ is separable \Iff $C(X)$ has the countable sup property.
\end{cor}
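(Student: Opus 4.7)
The plan is essentially to chain together two facts: the just-proved Theorem~\ref{CCC = countabe sup} characterizing the countable sup property of $C(X)$ by CCC under the $T_{3\frac12}$ hypothesis, and the classical equivalence (already cited in \Cref{chain conditions}) between separability and CCC for metrizable spaces.

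First I would observe that every metrizable space is $T_{3\frac12}$ (indeed metric spaces are normal, hence completely regular Hausdorff), so \Cref{CCC = countabe sup} applies and gives that $C(X)$ has the countable sup property \Iff $X$ is CCC.

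Next I would invoke the equivalence of CCC and separability in the metrizable setting. One direction, separability $\Rightarrow$ CCC, is a standard fact holding for arbitrary topological spaces (a countable dense set meets every nonempty open set, so any pairwise disjoint family of nonempty open sets injects into the countable dense set). The converse, CCC $\Rightarrow$ separability for metrizable $X$, is exactly \cite[Theorem 1]{VR:59} already cited in the paper. Combining these two equivalences yields the corollary.

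There is no real obstacle here; the statement is a direct specialization of \Cref{CCC = countabe sup} together with a quoted theorem, and the only thing to check carefully is that metrizability is indeed enough to land us inside $T_{3\frac12}$ and to use the Vaughan--Ross style result on CCC vs.~separability in the metric setting. Everything else is a straightforward application of the already-established equivalences.
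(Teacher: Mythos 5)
Your argument is exactly the paper's: the corollary is stated to follow immediately from \Cref{CCC = countabe sup} because metrizable spaces are $T_{3\frac12}$, combined with the equivalence of CCC and separability for metrizable spaces already recorded (with the citation to \cite{VR:59}) in \Cref{chain conditions}. The proposal is correct and takes essentially the same route, so there is nothing further to add.
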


When the underlying topological space in \Cref{separable=CCC} is a topological group, the countable sup property is connected to $\aleph_0$-boundedness of the group.
Recall that a topological group $G$ is said to be \term{$\aleph_0$-bounded} whenever for every open set $U$ there is a countable set $S$ such that $$G=U\cdot S=\bigcup_{s\in S}U\cdot s.$$

\begin{thm}\label{groups CCC}
Let $G$ be a metrizable topological group. Then the following assertions are equivalent.
\begin{enumerate}
\item[(a)] $G$ is separable.
\item[(b)] $G$ is CCC.
\item[(c)] $G$ is $\aleph_0$-bounded.
\item[(d)] $C(G)$ has the countable sup property.
\end{enumerate}
\end{thm}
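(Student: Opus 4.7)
The plan is to observe that three of the four equivalences are already done and to supply only the link with $\aleph_0$-boundedness. By \Cref{separable=CCC}, (a) and (d) are equivalent for any metrizable space; (a)$\Rightarrow$(b) is automatic (every separable space is CCC) and (b)$\Rightarrow$(a) for metrizable spaces is the classical fact cited earlier (see \cite[Theorem 1]{VR:59}). So it suffices to close the cycle by proving (a)$\Leftrightarrow$(c).

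For (a)$\Rightarrow$(c), I would fix a countable dense subset $S\subseteq G$ and an arbitrary open set $U$. Given $g\in G$, the translate $U^{-1}g$ is a nonempty open set, so density of $S$ produces some $s\in S\cap U^{-1}g$; then $g\in U\cdot s$, giving $G=U\cdot S$. This direction uses only density and continuity of the group operations.

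For (c)$\Rightarrow$(a), I would exploit metrizability to choose a countable \emph{symmetric} local base $(U_n)$ at the identity (this is standard: take metric balls around $e$ in a left-invariant metric and intersect with their inverses). By $\aleph_0$-boundedness, for every $n$ there exists a countable $S_n$ with $G=U_n\cdot S_n$, and $S:=\bigcup_n S_n$ is countable. To verify density, take $g\in G$ and an open neighborhood $V$ of $g$; the set $Vg^{-1}$ is open and contains $e$, so $U_n\subseteq Vg^{-1}$ for some $n$. Writing $g=us$ with $u\in U_n$ and $s\in S_n$ yields $s=u^{-1}g$, and symmetry of $U_n$ gives $s\in U_n g\subseteq V$, so $S\cap V\neq\emptyset$.

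I do not anticipate a real obstacle here: the argument is short once the bookkeeping of left versus right translation is fixed. The only minor subtlety is choosing the local base to be symmetric, which is what allows the element $u^{-1}$ arising in the decomposition $g=us$ to be controlled by the same neighborhood $U_n$; without symmetry one would have to pass between $U_n$ and $U_n^{-1}$, which is a cosmetic but genuine nuisance.
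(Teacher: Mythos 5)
Your proof is correct. The reductions (a)$\Leftrightarrow$(b) (metrizability plus the classical Varadarajan--Ranga Rao fact), (a)$\Leftrightarrow$(d) (via \Cref{separable=CCC}), and your (c)$\Rightarrow$(a) argument with a symmetric countable local base are exactly what the paper does; in particular your handling of $g=us$, $s=u^{-1}g\in U_ng\subseteq V$ is the same computation, modulo left/right conventions. The one place you genuinely diverge is the remaining arrow into (c): the paper proves (b)$\Rightarrow$(c) by citing Tkachenko's result that every CCC topological group is $\aleph_0$-bounded, whereas you prove (a)$\Rightarrow$(c) directly from a countable dense set $S$ by intersecting it with the open set $U^{-1}g$. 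Your version is more elementary and keeps the proof self-contained, at the cost of using the full strength of separability rather than just CCC; in the metrizable setting the two are interchangeable since (a) and (b) are already known to be equivalent, so nothing is lost. (One cosmetic point: the paper's definition of $\aleph_0$-boundedness quantifies over ``every open set,'' which should be read as every nonempty open set, or equivalently every neighborhood of the identity, for your (a)$\Rightarrow$(c) argument and indeed for the statement itself to make sense.)
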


\begin{proof}
Since $G$ is metrizable, (a) and (b) are equivalent. Also, metrizability of $G$ implies $G\in T_{3\frac 1 2}$, so that (b) and (d) are equivalent by \Cref{CCC = countabe sup}. That (b) implies (c) follows from  \cite[Proposition 3.3]{Tkachenko}.

(c)$\Rightarrow$(b) Since $G$ is metrizable, the unit element has a countable local basis $(U_n)$ consisting of symmetric sets. For each $n\in\mathbb N$ there is a countable set $S_n$ such that $G=U_n\cdot S_n.$ We claim that the countable set $S:=\bigcup_{n=1}^\infty S_n$ is dense in $G$. To see this, pick an open set $U$ in $G$ and $x\in U$. Then there is $n\in \mathbb N$ such that $U_nx\subseteq U.$ Also, there are $u\in U_n$ and $s\in S_n$ such that $x=us.$ Since
$s=u^{-1}us=u^{-1}x\in U_nx\subseteq U$, we conclude that $S$ is dense in $G$.
\end{proof}

As a special case of \Cref{groups CCC} we consider normed spaces. It is well-known that a normed space $X$ is separable whenever its norm dual $X^*$ is separable. Applying \Cref{groups CCC} one sees that $X$ is CCC whenever $X^*$ is CCC.
Therefore, maybe the easiest example of a Banach space which is not CCC is $\ell_\infty.$ This example also shows that CCC property is not closed under taking norm duals.

\Cref{separable=CCC} can be used as an argument why $C(\mathbb R^n)$ has the countable sup property. We actually do not require the full force of separability of $\mathbb R^n$. In \cite[Theorem 5.4]{Adeeb:17} authors argued as follows: the space $\mathbb R^n$ can be exhausted by  sets $\{m B_\infty^n:\; m\in\mathbb N\}$ where $B_\infty^n$ denotes the closed unit ball in $\mathbb R^n$ with respect to $\|\cdot\|_\infty$-norm. Since the Riemann integral is a strictly positive functional on $C(m B_\infty^n)$ for each $m\in \mathbb N$ the space $C(m B_\infty^n)$ has the countable sup property. By exhausting $\mathbb R^n$ with closed balls $m B_\infty^n$ they proved that $C(\mathbb R^n)$ itself has the countable sup property. It should be noted here that $C(\mathbb R^n)$ does not admit a strictly positive functional.

We proceed to the extension of \cite[Theorem 5.4]{Adeeb:17}.

\begin{prop}\label{exhaust}
If there exist sets $(A_n)$ in $X$ such that $C(A_n)$ has the countable sup property and $\bigcup_{n=1}^\infty A_n$ is dense in $X$, then $C(X)$ has the countable sup property.
\end{prop}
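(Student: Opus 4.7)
The plan is to reduce the statement to \Cref{C(X) C_b(X)}, which tells us that $C(X)$ has the countable sup property precisely when $X$ satisfies CCC for cozero sets (and similarly for each subspace $A_n$). So the whole proposition will follow once I show that, given subspaces $A_n$ each satisfying CCC for cozero sets whose union is dense in $X$, the space $X$ itself satisfies CCC for cozero sets.

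To prove this, I would start with a family $(U_\lambda)_{\lambda\in\Lambda}$ of pairwise disjoint nonempty cozero sets in $X$, say $U_\lambda=X\setminus\mathcal Z(f_\lambda)$ with $f_\lambda\in C(X)$. For every $n$, the restriction $f_\lambda|_{A_n}$ lies in $C(A_n)$ and satisfies $U_\lambda\cap A_n=A_n\setminus \mathcal Z(f_\lambda|_{A_n})$, so the family $(U_\lambda\cap A_n)_\lambda$ consists of pairwise disjoint cozero sets in $A_n$. Since $C(A_n)$ has the countable sup property, \Cref{C(X) C_b(X)} gives that the set $\Lambda_n:=\{\lambda\in\Lambda:\,U_\lambda\cap A_n\neq\emptyset\}$ is countable.

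The density hypothesis now closes the argument: each $U_\lambda$ is a nonempty open set in $X$, so $U_\lambda$ meets the dense set $\bigcup_{n=1}^\infty A_n$; hence $U_\lambda\cap A_n\neq\emptyset$ for some $n$, that is $\lambda\in\Lambda_n$. Therefore $\Lambda=\bigcup_{n=1}^\infty\Lambda_n$ is countable, which shows $X$ satisfies CCC for cozero sets, and another application of \Cref{C(X) C_b(X)} yields that $C(X)$ has the countable sup property.

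There is no serious obstacle here: the only point worth noting is to verify that cozero sets in $X$ restrict to cozero sets in each subspace $A_n$ (so that the CCC for cozero sets of $A_n$ can be applied), which is immediate from the fact that continuity is preserved under restriction. Alternatively, one could phrase the same argument directly via \Cref{countable sup disjoint system}, starting with a disjoint order bounded system $(f_\lambda)\subseteq C(X)_+$ and observing that the restrictions $(f_\lambda|_{A_n})$ form a disjoint order bounded system in $C(A_n)$, but routing through cozero sets is cleaner because the topological density hypothesis fits there most naturally.
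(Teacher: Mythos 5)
Your proof is correct and is essentially the paper's own argument: the paper restricts a positive disjoint system $(f_\lambda)$ in $C(X)$ to each $A_n$, concludes from the countable sup property of $C(A_n)$ that only countably many restrictions are nonzero for each $n$, and then uses density of $\bigcup_{n=1}^\infty A_n$ to conclude that all but countably many $f_\lambda$ vanish. Routing the same restriction-plus-density argument through cozero sets and \Cref{C(X) C_b(X)} is only a cosmetic repackaging; the function-level variant you mention at the end is exactly the paper's proof.
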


\begin{proof}
Suppose $(f_\lambda)_{\lambda\in\Lambda}$ is a positive disjoint system in $C(X)$. Then
$(f_\lambda|_{A_n})_{\lambda\in\Lambda}$ is a positive disjoint system in $C(A_n)$ which is countable by the assumption. Hence, the set
$$\{\lambda\in\Lambda:\; f_\lambda|_{A_n}\neq 0 \textrm{ for some }n\in\mathbb N\}$$ is countable. This implies that $f_\lambda=0$ on $\bigcup_{n=1}^\infty A_n$ for all but countably many $\lambda\in \Lambda.$ Since $\bigcup_{n=1}^\infty A_n$ is dense in $X$, we conclude $f_\lambda=0$ for all but countably many $\lambda\in \Lambda.$
\end{proof}

\begin{cor}
Let $A$ be a dense subspace of a topological space $X$.
If $C(A)$ has the countable sup property, then $C(X)$ has the countable sup property.
\end{cor}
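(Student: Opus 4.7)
The plan is to deduce this corollary as a direct specialization of \Cref{exhaust}. Given that $A$ is dense in $X$ and $C(A)$ has the countable sup property, simply take the constant sequence $A_n := A$ for every $n \in \mathbb{N}$. Then $C(A_n) = C(A)$ has the countable sup property for each $n$, and $\bigcup_{n=1}^{\infty} A_n = A$ is dense in $X$. All hypotheses of \Cref{exhaust} are met, so $C(X)$ has the countable sup property.

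Alternatively, one can give an essentially self-contained argument by copying the proof strategy of \Cref{exhaust} with only a single set. Using \Cref{countable sup disjoint system}, it suffices to show that every positive disjoint system $(f_\lambda)_{\lambda \in \Lambda}$ in $C(X)$ which is bounded above is countable. The restriction map $f \mapsto f|_A$ sends this to a positive disjoint system $(f_\lambda|_A)_{\lambda \in \Lambda}$ in $C(A)$, which by the countable sup property of $C(A)$ has only countably many nonzero members. Consequently $f_\lambda|_A = 0$ for all but countably many $\lambda$, and since $A$ is dense in $X$ and each $f_\lambda$ is continuous, $f_\lambda \equiv 0$ on $X$ for all but countably many $\lambda$. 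Invoking \Cref{countable sup disjoint system} again yields the countable sup property for $C(X)$.

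There is no substantive obstacle here, since the corollary is a one-step consequence of the preceding proposition; the only point worth verifying is that the subspace $A$ is equipped with its subspace topology so that restriction $C(X) \to C(A)$ is well-defined and preserves the lattice operations, which is immediate. I would therefore present the proof as a single sentence invoking \Cref{exhaust} with $A_n = A$ for all $n$.
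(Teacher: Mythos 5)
Your proposal is correct and matches the paper's intent exactly: the corollary is stated as an immediate specialization of \Cref{exhaust} (take $A_n = A$ for all $n$), and the paper gives no further argument. Your alternative self-contained restriction argument is also sound, but unnecessary.
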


 Suppose $U$ is a dense set in $X$. It is easy to see that $X$ is CCC \Iff $U$ is CCC. Hence, when $X\in T_{3 \frac 1 2}$ the lattice $C(X)$ has the countable sup property \Iff $C(U)$ has the countable sup property. The following example shows that this does not hold when $X\notin T_{3\frac 1 2}.$

\begin{ex}
Let $X$ be the topological space from \Cref{CCC cozero strictly positive functional}. Since $C_b(X)$ has the countable sup property and $X$ is not CCC, by \Cref{CCC = countabe sup} we conclude $X\notin T_{3\frac 1 2}.$  It is easy to see that the open set $U=(\mathbb R\setminus \mathbb Q)^2$ is dense in $X$.
Since for each $\lambda\in \mathbb R\setminus \mathbb Q$ the set $\mathbb R\setminus \mathbb Q \times \{\lambda\}$ is both open and closed in $X$, the characteristic function $f_\lambda$ of $\mathbb R\setminus \mathbb Q\times \{\lambda\}$ is continuous. Hence,
$\{f_\lambda:\; \lambda\in \mathbb R\setminus \mathbb Q\}$ is an uncountable positive disjoint system in $C(U)$. This shows  that $C(U)$ does not have the countable sup property.
\end{ex}

The following example shows that even when $X\in T_{3\frac 1 2}$ and $C(X)$ has the countable sup property, one can find a subset $A\subseteq X$ such that $C(A)$ does not have the countable sup property.

\begin{ex}
Let $\RR_S$ be the Sorgenfrey line, i.e. the set $\RR$ with the topology defined by a basis $\{[a,b):\; a,b\in\RR\}$.
Since $\RR_S\in T_{3\frac 1 2}$, we conclude that the product $\RR_S\times \RR_S\in T_{3\frac 1 2}$. Since $\QQ^2$ is dense in $\RR_S\times \RR_S$,  the space $\RR_S\times \RR_S$ is separable and therefore it is CCC. Hence, $C(\RR_S\times \RR_S)$ has the countable sup property.
On the other hand, the set $A:=\{(x,-x):\; x\in\RR\}$ is uncountable and discrete. This implies $A$ is not CCC.  Since the property $T_{3\frac 1 2}$ is hereditary, $C(A)$ does not have the countable sup property by \Cref{CCC = countabe sup}.
\end{ex}


Although in \Cref{exhaust} we exhausted the space $X$, we did not exhaust the vector lattice $C(X)$ since $C(A)$ is not even a subset of $C(X)$. However, if one considers the case when $X\in T_{3\frac 1 2}$ the restriction operator $\Phi_A\colon f\mapsto f|_A$ induces a lattice isomorphism between $C(X)/\ker \Phi_A$ and an order dense sublattice of $C(A)$ (see e.g. \cite[Proposition 3.5]{KV}). This remark and \Cref{exhaust} lead us to the following general ``exhaustion-type" result for the countable sup property of general vector lattices.

\begin{prop}\label{quotients CS}
Let $E$ be a vector lattice and suppose there exist countably many uniformly closed ideals $J_n$ in $E$ such that $E/J_n$ has the countable sup property for each $n\in\mathbb N$. If $\bigcap_{n=1}^\infty J_n=\{0\}$, then $E$ has the countable sup property.
\end{prop}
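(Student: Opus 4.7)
The natural tool here is \Cref{countable sup disjoint system}: rather than verifying the countable sup property directly, I plan to show that every order bounded disjoint system of nonzero positive vectors in $E$ is countable. So let $(x_\lambda)_{\lambda\in\Lambda}$ be such a system, bounded above by some $y\in E_+$.

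For each $n$, the quotient map $\pi_n\colon E\to E/J_n$ is a lattice homomorphism, so the family $(\pi_n(x_\lambda))_{\lambda\in\Lambda}$ is a disjoint system of positive vectors in $E/J_n$, and it is dominated by $\pi_n(y)$. Since $J_n$ is uniformly closed, $E/J_n$ is Archimedean and fits in the framework of the paper, and by assumption it has the countable sup property. Applying \Cref{countable sup disjoint system} inside $E/J_n$ yields that the set
$$\Lambda_n:=\{\lambda\in\Lambda:\; \pi_n(x_\lambda)\neq 0\}=\{\lambda\in\Lambda:\; x_\lambda\notin J_n\}$$
is countable.

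Consequently $\bigcup_{n\in\mathbb N}\Lambda_n$ is countable. For any $\lambda$ outside this union we would have $x_\lambda\in J_n$ for every $n$, hence $x_\lambda\in\bigcap_{n=1}^\infty J_n=\{0\}$, contradicting the assumption that the vectors of a disjoint system are nonzero. Thus $\Lambda=\bigcup_n\Lambda_n$ is countable, and another appeal to \Cref{countable sup disjoint system} shows that $E$ has the countable sup property.

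The proof is essentially a pushforward–pullback argument, and the only subtle point is making sure the quotients $E/J_n$ are genuinely Archimedean vector lattices so that \Cref{countable sup disjoint system} applies to them; this is exactly what the uniform closedness hypothesis on the ideals $J_n$ provides. Everything else reduces to the elementary facts that quotients by ideals are lattice homomorphisms and that a countable union of countable sets is countable.
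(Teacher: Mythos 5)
Your argument is correct and coincides with the paper's own proof: both push the order bounded disjoint system forward along the quotient maps, use uniform closedness of $J_n$ to ensure $E/J_n$ is Archimedean so that \Cref{countable sup disjoint system} applies, and then use $\bigcap_{n=1}^\infty J_n=\{0\}$ to see that only countably many $x_\lambda$ can be nonzero. No gaps.
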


\begin{proof}
Pick an order bounded disjoint system $(x_\lambda)_{\lambda\in\Lambda}$ of positive vectors in $E$. Then $(x_\lambda+J_n)_{\lambda\in\Lambda}$ is an order bounded disjoint system of positive vectors in $E/J_n.$ Since $J_n$ is uniformly complete, $E/J_n$ is Archimedean, so that \Cref{countable sup disjoint system} implies that $(x_\lambda+J_n)_{\lambda\in\Lambda}$ is countable.
Let us define $$\Lambda'=\{\lambda\in \Lambda:\; x_\lambda+J_n\neq 0 \textrm{ for some }n\in\mathbb N\}.$$ Then
$(x_\lambda)_{\lambda\in\Lambda'}$ is a countable disjoint system of positive vectors. If $x_\mu\wedge x_\lambda=0$ for each $\lambda\in\Lambda'$, then $x_\mu\in J_n$ for each $n\in\mathbb N$, so that $x_\mu\in \bigcap_{n=1}^\infty J_n=\{0\}.$ This implies that $E$ has the countable sup property.
\end{proof}

\section{Vector lattices $C_c(X)$ and $C_0(X)$} \label{ccc0}

As was already observed in \Cref{C(X) C_b(X)}, a vector lattice $C(X)$ has the countable sup property \Iff $C_b(X)$ has the countable sup property. This followed from the fact that $C_b(X)$ is an order dense ideal in $C(X)$ and that $\one$ is a (weak) unit in $C_b(X)$. In this section we are interested in finding necessary and sufficient conditions for the lattices $C_c(X)$ and $C_0(X)$ to have the  countable sup property in terms of properties of $X$. In order to establish the connection between CCC property of $X$ and the countable sup property of $C_0(X)$, we first characterize  weak units in $C_0(X)$.

\begin{ex}
Let $\Omega$ be a set equipped with the discrete topology. Then the vector lattice $C_0(\Omega)$  has weak units \Iff $\Omega$ is countable.
\end{ex}
\begin{prop}\label{C_0 CS}
Let $X$ be a locally compact Hausdorff space.
 \begin{enumerate}
 \item[(a)] A nonnegative function $f\in C_0(X)$ is a weak unit in $C_0(X)$ \Iff $\Int\mathcal Z(f)=\emptyset.$
 \item[(b)] $C_0(X)$ has a weak unit \Iff $X$ contains an open dense $\sigma$-compact subspace.
\end{enumerate}
\end{prop}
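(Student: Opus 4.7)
For (a), I would first prove the forward direction by contraposition. Assume $\Int\mathcal{Z}(f)\neq\emptyset$; since $X$ is locally compact Hausdorff, hence $T_{3\frac{1}{2}}$, Urysohn's lemma provides a nonzero $g\in C_c(X)\subseteq C_0(X)$ with $g\geq 0$ supported in $\Int\mathcal{Z}(f)$. Because $f\equiv 0$ on the support of $g$, we have $f\wedge g=0$ while $g\neq 0$, so $f$ is not a weak unit. For the converse, suppose $\Int\mathcal{Z}(f)=\emptyset$ and pick $x\in C_0(X)$ with $|x|\wedge f=0$. Then at every point either $|x|$ or $f$ vanishes, so the open set $\{|x|>0\}$ is contained in $\mathcal{Z}(f)$ and must therefore be empty; hence $x=0$.

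For (b), the forward implication uses (a) directly. If $f\geq 0$ is a weak unit in $C_0(X)$, then $U:=X\setminus\mathcal{Z}(f)=\{f>0\}$ is open, and by (a) it is dense. Moreover $U=\bigcup_{n=1}^{\infty}\{f\geq 1/n\}$, and each level set $\{f\geq 1/n\}$ is compact by the definition of $C_0(X)$; thus $U$ is $\sigma$-compact.

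For the converse, let $U=\bigcup_{n=1}^{\infty}K_n$ with $U$ open dense in $X$ and each $K_n$ compact. For each $n$, using local compactness pick an open $V_n\supseteq K_n$ with $\overline{V_n}$ compact, and apply Urysohn's lemma to obtain $f_n\in C_c(X)$ with $0\leq f_n\leq 1$, $f_n\equiv 1$ on $K_n$ and $\supp f_n\subseteq\overline{V_n}$. Define
\[
f:=\sum_{n=1}^{\infty}2^{-n}f_n.
\]
The series converges uniformly, so $f\in C(X)$ with $0\leq f\leq 1$. To verify $f\in C_0(X)$, fix $\varepsilon>0$ and choose $N$ with $\sum_{n>N}2^{-n}<\varepsilon/2$; then $\{f\geq\varepsilon\}\subseteq\bigcup_{n\leq N}\supp f_n$, which is compact, so the closed set $\{f\geq\varepsilon\}$ is compact. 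Finally, $f\geq 2^{-n}$ on $K_n$, hence $f>0$ throughout $U$, which gives $\mathcal{Z}(f)\subseteq X\setminus U$; since $U$ is dense, $\Int\mathcal{Z}(f)=\emptyset$, and (a) yields that $f$ is a weak unit.

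The only non-routine point is the backward direction of (b), where one must simultaneously ensure continuity of the sum, compactness of its level sets (to land in $C_0(X)$), and strict positivity on the given dense $\sigma$-compact set. All three are secured by the weighting $2^{-n}$ together with the compact-support choice of each $f_n$ from Urysohn's lemma.
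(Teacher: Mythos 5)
Your proof is correct and follows essentially the same route as the paper: part (a) via Urysohn's lemma and density of $X\setminus\mathcal Z(f)$, and part (b) via the weighted sum $\sum_n 2^{-n}f_n$ of Urysohn bump functions, strictly positive on the dense $\sigma$-compact open set, followed by an appeal to (a). The only differences are cosmetic: the paper builds the $f_n$ from a nested exhaustion of $U$ by relatively compact open sets (citing Dugundji), whereas you cover each compact piece $K_n$ directly, and you explicitly verify that $f\in C_0(X)$, a point the paper leaves implicit.
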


\begin{proof}
(a) Since $f$ is continuous, $\mathcal Z(f)$ is a closed subset of $X$. Suppose $\Int \mathcal Z(f)=\emptyset$. If $g\wedge f=0$, then $g\equiv 0$ on $X\setminus \mathcal Z(f)$. Since $X\setminus \mathcal Z(f)$ is dense in $X$ and $g$ is continuous, we have $g=0$.

Assume now that $\Int\mathcal Z(f)$ is nonempty and pick $x\in \Int \mathcal Z(f)$. By Urysohn's lemma for $C_0(X)$-spaces, there exists a nonnegative function $g\in C_0(X)$ with $g(x)=1$ and $g\equiv 0$ on $X\setminus \Int \mathcal Z(f)$. Then $f\wedge g=0$ and since $g\neq 0$ we conclude that $f$ is not a weak unit in $C_0(X)$.

(b) If $f\in C_0(X)$ is a weak unit, then $\Int \mathcal Z(f)=\emptyset.$ Since for each $\epsilon>0$ the set
$\{x\in X:\; f(x)\geq \epsilon\}$ is compact in $X$, the dense set
$$X\setminus \mathcal Z(f)=\bigcup_{n=1}^\infty \{x\in X:\; f(x)\geq \tfrac 1 n\}$$
is $\sigma$-compact.

For the converse, assume $U\subseteq X$ is a $\sigma$-compact open dense subspace in $X$ and define $F=X\setminus U$.

Since $U$ is open in $X$, it is locally compact and since it is $\sigma$-compact, \cite[Theorem X.7.2]{Dugundji:66} implies that there exists an increasing sequence of relatively compact open sets $(U_n)$ in $U$ such that $\overline{U_n}\subseteq U_{n+1}.$ By Urysohn's lemma for $C_0(X)$-spaces there exists a function $f_n:X\to[0,1]$ with compact support such that $f_n\equiv 1$ on $\overline{U_n}$ and $f\equiv 0$ on $X\setminus U_{n+1}.$ The function $f:=\sum_{n=1}^\infty \frac{f_n}{2^n}$ is strictly positive on $U$ and zero on $F$. By (a) we conclude that $f$ is a weak unit in $C_0(X)$.
\end{proof}

\begin{rem}
If $X$ is not compact, then $C_c(X)$ does not have a weak unit. Indeed, if $f$ would be a weak unit in $C_c(X)$, then the set $X\setminus \mathcal Z(f)$ is an open $\sigma$-compact dense set in $X$. Since $\supp f$ is compact, we obtain that
$X=\overline{X\setminus \mathcal Z(f)}=\supp f$ is a compact set.
\end{rem}



In the following example we construct a locally compact Hausdorff space $X$ which is not $\sigma$-compact, yet $C_0(X)$ has a weak unit. This example shows that, in general, the existence of an open dense $\sigma$-compact set of a locally compact space does not imply that the whole space is $\sigma$-compact.

\begin{ex}
Let $X=\RR$ and for every $x\not\in\QQ$ we pick a sequence of rational numbers $(q_n(x))_{n\in\NN}$ with
$\lim_{n\to\infty} q_n(x)=x$. Let $\tau$ be the topology on $X$ such that $\{x\}$ is open for every $x\in\QQ$
and
$$\{U_n(x)=\{x,q_n(x),q_{n+1}(x),\ldots\}\mid n\in\NN\}$$ is a fundamental system for $x\not\in\QQ$.
The space $X$ is Hausdorff and locally compact.

Let $A\subset X$ be such that $A\cap (\RR\setminus \QQ)=\{x_\lambda:\; \lambda\in\Lambda\}$ is infinite.
Then $\{U_1(x_\lambda):\; \lambda\in\Lambda\}\cup\{\{x\}:\; x\in A\cap\QQ\}$ is an open cover of $A$.
Because $U_1(x_\lambda)$ is the only element of the cover containing $x_\lambda$, there is no
finite subcover of $A$. Hence every compact subset of $X$ contains only finitely many irrational numbers.
Therefore $X$ is not $\sigma$-compact while $\QQ\subset X$ is $\sigma$-compact, open and dense in $X$.

Let $(r_n)$ be an enumeration of the rational numbers. Since the function $f=\sum_{n=1}^\infty \tfrac{1}{2^n}\chi_{\{r_n\}}$ is continuous
on $X$, strictly positive on $\mathbb Q$ and zero on $\mathbb R\setminus \mathbb Q$, by \Cref{C_0 CS} $f$ is a weak unit in $C_0(X)$.
\end{ex}

In the rest of this section we consider when $C_0(X)$ has the countable sup property. If $X$ is CCC, then $C(X)$ has the countable sup property. Therefore, $C_0(X)$ as a sublattice of $C(X)$,  itself has the countable sup property. The following is an example of a space $X\in T_{3\frac 12}$ which is not CCC while $C_0(X)$ has the countable sup property.

\begin{ex}
Let $\Omega$ be an arbitrary uncountable set equipped by the discrete topology.
Then $\Omega$ is a locally compact Hausdorff space which is not CCC.
Since each $f\in C_0(\Omega)$ has a countable support, $C_0(\Omega)$ has the countable sup property.

\end{ex}

It should be noted that $C_0(\Omega)$ from the preceding example does not have a weak unit.

\begin{prop}
Let $X$ be a locally compact Hausdorff space.
If there is an open dense $\sigma$-compact set $U$ in $X$ and $C_0(X)$ has the countable sup property, then $X$ is CCC.
\end{prop}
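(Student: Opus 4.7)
The plan is to prove the contrapositive-style statement by directly verifying the CCC condition: let $(V_\lambda)_{\lambda\in\Lambda}$ be a family of pairwise disjoint nonempty open sets in $X$, and show $\Lambda$ must be countable. Since $U$ is dense in $X$, each set $V_\lambda\cap U$ is nonempty and open, so after replacing $V_\lambda$ by $V_\lambda\cap U$ we may assume $V_\lambda\subseteq U$ for all $\lambda$.

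Next I would translate this topological disjoint family into a disjoint family in $C_0(X)$. For each $\lambda\in\Lambda$, pick $x_\lambda\in V_\lambda$. Since $X$ is locally compact Hausdorff and $V_\lambda$ is an open neighborhood of $x_\lambda$, by Urysohn's lemma for locally compact Hausdorff spaces there exists a function $g_\lambda\colon X\to[0,1]$ with compact support contained in $V_\lambda$ and with $g_\lambda(x_\lambda)=1$. In particular $g_\lambda\in C_c(X)\subseteq C_0(X)$ and $g_\lambda\neq 0$. Since the supports of the $g_\lambda$ sit inside pairwise disjoint sets, $(g_\lambda)_{\lambda\in\Lambda}$ is a disjoint system of nonzero positive vectors in $C_0(X)$.

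The key ingredient is then \Cref{C_0 CS}(b): the existence of the open dense $\sigma$-compact set $U$ guarantees that $C_0(X)$ has a weak unit. Combining this with the hypothesis that $C_0(X)$ has the countable sup property, \Cref{countable sup weak unit} forces every disjoint system of positive vectors in $C_0(X)$ to be countable. Hence $\Lambda$ is countable, proving $X$ is CCC.

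There is no real obstacle here; the only care needed is to ensure that the functions $g_\lambda$ actually belong to $C_0(X)$ (which is why we use the compactly supported Urysohn functions rather than functions only continuous in $X$) and to invoke \Cref{C_0 CS}(b) to obtain a weak unit of $C_0(X)$ so that \Cref{countable sup weak unit} applies. The density of $U$ is needed in the initial reduction step, because disjointness of the $V_\lambda$ outside $U$ would not produce functions in $C_0(X)$ witnessing disjointness.
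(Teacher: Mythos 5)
Your proof is correct and follows essentially the same route as the paper: build a disjoint system of nonzero compactly supported Urysohn functions inside the disjoint open sets, then use \Cref{C_0 CS}(b) to obtain a weak unit in $C_0(X)$ and apply \Cref{countable sup weak unit} to conclude the family is countable. One small correction: the initial reduction to $V_\lambda\cap U$ is unnecessary (the paper omits it), since compactly supported Urysohn functions exist inside \emph{any} nonempty open subset of a locally compact Hausdorff space; the density and $\sigma$-compactness of $U$ are used only to secure the weak unit, not to witness disjointness.
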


\begin{proof}
Pick a disjoint family $(U_\lambda)$ of open sets of $X$. By Urysohn's lemma for each $\lambda$ there exists a nonzero nonnegative function  $f_\lambda\in C_0(X)$ with $\supp f_\lambda\subseteq U_\lambda.$ Hence, $(f_\lambda)$ is a disjoint system of nonnegative functions in $C_0(X)$. Since $C_0(X)$ has a weak unit by \Cref{C_0 CS}, \Cref{countable sup weak unit} implies that $(f_\lambda)$ is countable. Hence $(U_\lambda)$ is countable and $X$ is CCC.
\end{proof}

In particular, when $X$ is locally compact $\sigma$-compact Hausdorff space, then $X$ is CCC \Iff $C_0(X)$ has the countable sup property.
If $C_0(X)$ has the countable sup property, then $C_c(X)$ as a  sublattice of $C_0(X)$ has the property itself. It goes the other way around when $X\in T_{3 \frac 1 2}$:

\begin{thm}\label{CCC c_0 c_c}
For $X\in T_{3 \frac 1 2}$ the following assertions are equivalent.
\begin{enumerate}
\item[(a)] $C_c(X)$ has the countable sup property.
\item[(b)] $C_0(X)$ has the countable sup property.
\item[(c)] For each compact set $K\subseteq X$ there exists a relatively compact cozero set $U$ such that $K\subseteq U$ and $C(\overline U)$ has the countable sup property.
\item[(d)] $X$ can be covered by relatively compact cozero sets which satisfy CCC.
\end{enumerate}
\end{thm}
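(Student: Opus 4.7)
I would prove the cycle $(b)\Rightarrow(a)\Rightarrow(b)\Rightarrow(c)\Rightarrow(d)\Rightarrow(b)$, tacitly using local compactness of $X$ alongside the $T_{3\frac{1}{2}}$ axiom (which is the standard setting for $C_0(X)$ and is what makes (c) and (d) nonvacuous). Since $C_c(X)$ is a vector sublattice of $C_0(X)$, the countable sup property passes from $C_0(X)$ to $C_c(X)$, giving $(b)\Rightarrow(a)$ immediately.

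For $(a)\Rightarrow(b)$ I would invoke \Cref{countable sup disjoint system}: take a positive disjoint family $(f_\lambda)_{\lambda\in\Lambda}$ in $C_0(X)$ bounded above by some $g\in C_0(X)_+$, and form the truncations $g_n=(g-\tfrac{1}{n})^+$, which lie in $C_c(X)$ because their supports are contained in the compact sets $\{g\geq 1/n\}$. For each $n$ the family $(f_\lambda\wedge g_n)_\lambda$ is a bounded disjoint positive system in $C_c(X)$ and hence countable by (a). Since $g_n\uparrow g$ together with $f_\lambda\leq g$ forces $f_\lambda\wedge g_n\uparrow f_\lambda$, every $\lambda$ with $f_\lambda\neq 0$ must satisfy $f_\lambda\wedge g_n\neq 0$ for some $n$, so $\{\lambda:f_\lambda\neq 0\}$ is a countable union of countable sets.

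The principal implication is $(b)\Rightarrow(c)$. Given compact $K\subseteq X$, local compactness produces a relatively compact open $V\supseteq K$; applying the $T_{3\frac{1}{2}}$ axiom pointwise on $K$ and extracting a finite subcover, I build $f\in C_c(X)$ with $0\leq f\leq 1$, $f\equiv 1$ on $K$, and $f\equiv 0$ on $X\setminus V$. The relatively compact cozero set $U:=\{f>0\}$ contains $K$, and it remains to verify that $C(\overline U)$ has the countable sup property; by \Cref{CCC = countabe sup} applied to the compact Hausdorff space $\overline U$, this reduces to showing $\overline U$ is CCC. Given a disjoint family $(W_\alpha)$ of nonempty open sets in $\overline U$, density of $U$ in $\overline U$ yields $x_\alpha\in W_\alpha\cap U$, and $T_{3\frac{1}{2}}$ provides $h_\alpha\in C(X)$ with $0\leq h_\alpha\leq 1$, $h_\alpha(x_\alpha)=1$, and $h_\alpha\equiv 0$ off $W_\alpha\cap U$. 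The products $fh_\alpha$ are pairwise disjoint, nonzero, and bounded above by $f\in C_0(X)$, so (b) forces $(W_\alpha)$ to be countable.

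For $(c)\Rightarrow(d)$ I would apply (c) to each singleton and use that a CCC compact space $\overline{U_x}$ has its open dense subset $U_x$ CCC. Finally for $(d)\Rightarrow(b)$, given a cover $(V_\beta)_{\beta\in B}$ of $X$ by relatively compact CCC cozero sets and a positive disjoint family $(f_\lambda)$ in $C_0(X)$ bounded by $g$, the compact sets $F_n=\{g\geq 1/n\}$ can each be covered by finitely many $V_\beta$'s, producing a countable subfamily $\mathcal A\subseteq B$ whose union contains $\{g>0\}\supseteq\bigcup_\lambda\{f_\lambda>0\}$. For each $\beta\in\mathcal A$ the CCC property of $V_\beta$ confines $\{\lambda:\{f_\lambda>0\}\cap V_\beta\neq\emptyset\}$ to countably many indices, and every nonzero $f_\lambda$ meets some $V_\beta$ with $\beta\in\mathcal A$, so only countably many $f_\lambda$ are nonzero. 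The step I expect to be most delicate is $(b)\Rightarrow(c)$: one must engineer the cut-off $f$ and the products $fh_\alpha$ so that they land in $C_0(X)$ and remain pairwise disjoint and nonzero, thereby converting a chain-condition question about $\overline U$ into a countability statement for a bounded disjoint family in $C_0(X)$.
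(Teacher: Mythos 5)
Your proof is correct, but it closes the cycle differently from the paper and two of your arguments are genuinely different. The paper proves (a)$\Rightarrow$(b), (a)$\Rightarrow$(c), (c)$\Rightarrow$(d) and (d)$\Rightarrow$(a), with (b)$\Rightarrow$(a) being the trivial sublattice observation made just before the theorem; you instead run (a)$\Rightarrow$(b)$\Rightarrow$(c)$\Rightarrow$(d)$\Rightarrow$(b). For (a)$\Rightarrow$(b) the paper manufactures, via Urysohn's lemma and compactness of the sets $f^{-1}([\tfrac 1n,\infty))$, a compactly supported $\varphi$ strictly positive there and passes to the disjoint family $(f_\lambda\varphi)$; your truncations $g_n=(g-\tfrac 1n)^+$ achieve the same reduction to $C_c(X)$ purely lattice-theoretically and are, if anything, cleaner. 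To reach (c) the paper argues from (a): it transfers a bounded disjoint system of $C(\overline U)$ into $C_c(X)$ by multiplying with the cutoff function and extending by zero, so it does not need to know that $\overline U$ is CCC at that stage; you argue from (b) by first proving $\overline U$ is CCC using the products $fh_\alpha$ and then invoking \Cref{CCC = countabe sup} on the compact Hausdorff space $\overline U$. Both are valid; the paper's route stays at the level of disjoint systems, while yours makes the topological content (CCC of $\overline U$) explicit at the cost of a second appeal to \Cref{CCC = countabe sup}. Your (c)$\Rightarrow$(d) coincides with the paper's, and your (d)$\Rightarrow$(b) is the natural $C_0$ analogue of the paper's (d)$\Rightarrow$(a): you extract a countable subcover of $\{g>0\}$ from the compact sets $\{g\ge\tfrac 1n\}$ where the paper only needs a finite subcover of $\supp f$. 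Finally, your explicit use of local compactness is consistent with the paper, whose own step (a)$\Rightarrow$(c) already requires a function in $C_c(X)$ equal to $1$ on a prescribed compact set and thus carries the same hidden hypothesis.
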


\begin{proof}
(a)$\Rightarrow$(b)
Pick a disjoint system $(f_\lambda)$ of functions in $C_0(X)$ such that  $0\leq f_\lambda\leq f$ for some $f\in C_0(X)$ and each $\lambda.$
For every $n\in\NN$ we define $K_n=f^{-1}([\tfrac 1 n,\infty))$, $U_n=f^{-1}((\tfrac 1 n,\infty))$
and $\Lambda_n=\{\lambda\in\Lambda:\; f_\lambda^{-1}((0,\infty))\cap K_n\ne\emptyset\}$. Since $f_n\in C_0(X)$, sets $K_n$ and $U_n$ are compact and open in $X$, respectively.
Let us fix $n\in\NN$.
Since $K_n\subset U_{n+1}$, for every $x\in K_n$ there exists $\varphi_x\colon X\to [0,1]$ with compact support
such that $\varphi_x(x)=1$ and $\varphi_x(X\setminus U_{n+1})=0$. Hence, $\{\varphi_x^{-1}((0,1]):\; x\in K_n\}$
is an open cover of $K_n$. By compactness of $K_n$ there exists a finite set $F\subset K_n$ such that $K_n\subset \bigcup_{x\in F}\varphi_x^{-1}((0,1])$.
The continuous function $\varphi:=\sum_{x\in F}\varphi_x$  satisfies $\varphi(x)>0$ for all $x\in K_n$ and $\varphi(X\setminus U_{n+1})=\{0\}$. Since $U_{n+1}$ is relatively compact, we conclude $\varphi\in C_c(X)$. Then $\{f_\lambda\cdot\varphi:\; \lambda\in\Lambda_n\}$ is a disjoint family of nonnegative functions satisfying  $0\leq f_\lambda\cdot\varphi\leq f\cdot\varphi\in C_c(X)$.  By the assumption  the set  $\Lambda_n$ is countable and since $\Lambda=\bigcup_{n=1}^\infty \Lambda_n$ we finally conclude that $\Lambda$ is countable. That $C_0(X)$ has the countable sup property follows from \Cref{countable sup disjoint system}.

(a)$\Rightarrow$(c)
Pick a compact set $K$ and find a nonnegative function $\varphi\in C_c(X)$ with $\varphi\equiv 1$ on $K$. The set
$U=\{x\in X:\; \varphi(x)>0\}$ is a cozero set with $\overline U=\supp \varphi$ compact.

We claim that $C(\overline U)$ has the countable sup property.
To prove this, pick a disjoint system $(f_\lambda)$ of nonnegative functions in $C(\overline U)$ which is bounded from above by $f\in C(\overline U)$.
For each $\lambda$ we define a function $g_\lambda\colon X\to \mathbb R$ by
$$g_\lambda(x)=\left\{\begin{array}{ccl}
\varphi(x)f_\lambda(x)&:& x\in\overline U\\
0&:& x\in X\setminus U
\end{array}\right..$$
If $x\in \overline U\setminus U$, then $\varphi(x)=0$. Hence, the restrictions of $g_\lambda$ to $\overline U$ and $X\setminus U$, respectively, agree. Since they are continuous and $\{\overline U,X\setminus U\}$ is a closed cover of $X$, we conclude $g_\lambda\in C_c(X)$. By a similar argument we see that  $g:X\to \mathbb R$ defined as
$$g(x)=\left\{\begin{array}{ccl}
\varphi(x)f(x)&:& x\in\overline U\\
0&:& x\in X\setminus U
\end{array}\right.$$
is also in $C_c(X)$.
By the assumption the family $(g_\lambda)$ is countable. From continuity of functions $f_\lambda$ and from  $\varphi>0$ on $U$ we conclude that $(f_\lambda)$ is also countable.

(c)$\Rightarrow$(d) Pick $x\in X$ and find a relatively compact cozero set $U$ such that $x\in U$ and $C(\overline U)$ has the countable sup property.  By  \Cref{CCC = countabe sup} the space $\overline U$ satisfies CCC and since $U$ is dense in $\overline U$, $U$ itself satisfies CCC.

(d)$\Rightarrow$(a) Pick a positive disjoint system $(f_\lambda)$ in $C_c(X)$ such that $0\leq f_\lambda \leq f$ for some $f\in C_c(X)$ and each $\lambda.$ Since the support $\supp f$ of  $f$ is compact,  there exist cozero sets $U_1,\ldots,U_n$ which satisfy CCC and $\supp f\subseteq U_1\cup \cdots \cup U_n.$ Since the set $U:=U_1\cup \cdots \cup U_n$  also satisfies CCC, the family $(f_\lambda|_U)$ of restrictions to $U$ is countable. To finish the proof note that each function $f_\lambda$ is zero on $X\setminus U$.
\end{proof}

Here we would like to point out two things. First, if $X$ is an uncountable discrete space, then $C_0(X)$ has the countable sup property while $C(X)$ does not. Second, when $X\in T_{3\frac 1 2}$, then $C_0(X)$ is the norm completion with respect to the the uniform topology of its order ideal $C_c(X)$. This leads us to the following question.

\begin{quest}
 Suppose $E$ is a locally solid vector lattice with the countable sup property. Does the topological completion $\widehat E$ of $E$ also have the countable sup property?
\end{quest}

The answer is yes when $E$ is metrizable and sits in $\widehat E$ as an order ideal.

\begin{thm}
Let $E$ be a metrizable locally solid vector lattice which is an ideal in its topological completion $\widehat E$. Then $E$ has the countable sup property \Iff $\widehat E$ has the countable sup property.
\end{thm}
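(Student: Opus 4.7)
For the easy direction, observe that $E$ sits as a vector sublattice of $\widehat E$, so by the remark after \Cref{countable sup disjoint system}, the countable sup property of $\widehat E$ descends to $E$ automatically.

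For the forward direction, the plan is to verify the disjoint-system criterion \Cref{countable sup disjoint system} in $\widehat E$ by pulling every order bounded disjoint family back into $E$ and invoking the hypothesis there. Given $(x_\lambda)_{\lambda\in\Lambda}\subseteq \widehat E_+$ pairwise disjoint with an upper bound $x\in \widehat E_+$, the bridge will be an increasing sequence in $E_+$ converging topologically to $x$. Since $E$ is topologically dense in the metrizable space $\widehat E$, pick $z_n\in E$ with $z_n\to x$; set $y_n:=z_n^+\wedge x$, which lies in $E$ because $y_n\le z_n^+\in E$ and $E$ is an ideal in $\widehat E$. Replacing $y_n$ by $y_n':=y_1\vee\dots\vee y_n$ gives an increasing sequence in $E_+$ with $y_n'\le x$ and $y_n'\to x$ (the latter follows from $0\le x-y_n'\le x-y_n\to 0$, using that the cone of $\widehat E$ is closed and that the lattice operations extend continuously to the locally solid completion).

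Next, I form the truncations $x_\lambda\wedge y_n'$. The ideal property again places them in $E_+$, they are increasing in $n$ and bounded by $y_n'\in E$, and by continuity of $\wedge$ they converge to $x_\lambda\wedge x=x_\lambda$. Hence whenever $x_\lambda\ne 0$, Hausdorffness of the topology forces $x_\lambda\wedge y_{n(\lambda)}'>0$ for some $n(\lambda)\in\NN$. A pigeonhole on $n$ produces a single index $n_0$ such that $\Lambda_0:=\{\lambda:x_\lambda\wedge y_{n_0}'>0\}$ has the same cardinality as $\{\lambda:x_\lambda\ne 0\}$. But $(x_\lambda\wedge y_{n_0}')_{\lambda\in\Lambda_0}$ is a disjoint positive family in $E$ bounded above by $y_{n_0}'\in E$, so \Cref{countable sup disjoint system} and the hypothesis on $E$ force $\Lambda_0$ (hence $\{\lambda:x_\lambda\ne 0\}$) to be countable.

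The only subtlety to check is that the completion $\widehat E$ of a metrizable locally solid vector lattice is again locally solid with continuous lattice operations and closed cone; this is standard and is the sole place where metrizability is used. The ideal hypothesis, by contrast, is used twice in an essential way: first to land $y_n$ inside $E$, and then to land the truncations $x_\lambda\wedge y_{n_0}'$ inside $E$ so that the countable sup property of $E$ can be applied.
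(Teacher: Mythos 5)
Your proof is correct and takes essentially the same route as the paper's: approximate $x$ by a sequence $(y_n)$ in $E$ truncated below $x$ (using the ideal hypothesis), apply the countable sup property of $E$ to each disjoint family $(x_\lambda\wedge y_n)_{\lambda}$, and use continuity of the lattice operations to see that every nonzero $x_\lambda$ is detected by some $y_n$; the monotonization $y_n'=y_1\vee\dots\vee y_n$ is harmless but not needed. One small caveat: your pigeonhole claim that some $n_0$ gives $\lvert\Lambda_0\rvert=\lvert\{\lambda: x_\lambda\ne 0\}\rvert$ can fail for cardinals of countable cofinality, but it is also unnecessary --- each set $\{\lambda: x_\lambda\wedge y_n'>0\}$ is countable by the hypothesis on $E$, so their union $\{\lambda: x_\lambda\ne 0\}$ is countable, which is exactly how the paper concludes.
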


\begin{proof}
Assume that $E$ has the countable sup property, pick a nonzero vector $0\leq x\in \widehat E$ and a positive disjoint system $(x_\lambda)_{\lambda\in\Lambda}$ which satisfies $0\leq x_\lambda\leq x$ for each $\lambda.$
 Find a sequence $(y_n)$ in $E$ with $y_n\to x$ in $\widehat E$. Since the lattice operations are continuous, by successively replacing $y_n$ first with $y_n^+$ and then with $y_n\wedge x$ we may assume that $0\leq y_n\leq x$ for each $n\in\mathbb N$. Since $0\leq y_n\wedge x_\lambda\leq y_n\wedge x$ in $E$,
 the disjoint system $(y_n\wedge x_\lambda)_{\lambda\in\Lambda}$ is countable in $E$ for each $n\in\mathbb N$. Hence, the set
 $$\{(n,\lambda)\in \mathbb N\times \Lambda:\; y_n\wedge x_\lambda\neq 0\}$$ is countable. Since $y_n\to x$, for each $\lambda\in\Lambda$ there is at least one $n(\lambda)\in \mathbb N$ such that $y_{n(\lambda)}\wedge x_\lambda\neq 0$, so that $\Lambda$ is countable. By \Cref{countable sup disjoint system} we conclude that $\widehat E$ has the countable sup property.
\end{proof}

\begin{ex}
Let $\Lambda$ be an uncountable set equipped with the discrete topology. We equip the vector lattice $C_c(\Lambda)$ by the topology of pointwise convergence which is not metrizable  since $\Lambda$ is uncountable. The topological completion of $C_c(\Lambda)$ is the vector lattice $\mathbb R^\Lambda$ of all real valued functions on $\Lambda$ which does not have the countable sup property while $C_c(\Lambda)$ itself has the property.
\end{ex}

\section{The countable sup property of the lattice $C(X\times Y)$} \label{ccc product}

The product of two CCC spaces is not necessary a CCC space. We will see that the product of two
vector lattices $E_1$ and $E_2$ with the countable sup property has the same property with respect to both natural
orders on $E_1\times E_2$. More interesting question is if a vector lattice $C(X\times Y)$ has the
countable sup property, provided the lattices $C(X)$ and $C(Y)$ have the property.

Let $E_1$ and $E_2$ be vector lattices. On the product $E_1\times E_2$ of the sets $E_1$ and $E_2$ we can
naturally define two orders:
\begin{itemize}
\item $(x_1,y_1)\leq (x_2,y_2)$ iff $x_1\le x_2$ and $y_1\le y_2$,
\item $(x_1,y_1)\preceq (x_2,y_2)$ iff $x_1<x_2$ or ($x_1=x_2$ and $y_1\le y_2$).
\end{itemize}
The second one is \term{lexicographical order}.

\begin{prop}
If $E_1$ and $E_2$ have the countable sup property then both lattices
$(E_1\times E_2,\leq)$ and $(E_1\times E_2,\preceq)$ have the countable sup property.
\end{prop}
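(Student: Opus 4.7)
For the coordinatewise order $\leq$, I plan to use the disjoint-system characterization of the countable sup property (\Cref{countable sup disjoint system}). Given an order bounded disjoint positive system $((x_\lambda,y_\lambda))_{\lambda\in\Lambda}$ in $(E_1\times E_2,\leq)$ bounded above by $(u_1,u_2)$, disjointness reads componentwise: $x_\lambda\wedge x_\mu=0$ and $y_\lambda\wedge y_\mu=0$ for $\lambda\ne\mu$. Setting $\Lambda_1=\{\lambda:x_\lambda\ne 0\}$ and $\Lambda_2=\{\lambda:y_\lambda\ne 0\}$, each of $(x_\lambda)_{\lambda\in\Lambda_1}$ and $(y_\lambda)_{\lambda\in\Lambda_2}$ is an order bounded disjoint positive system in the respective factor, hence countable by the assumed countable sup property of $E_i$. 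Since $(x_\lambda,y_\lambda)\ne(0,0)$, one has $\Lambda=\Lambda_1\cup\Lambda_2$, which is countable.

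For the lexicographic order $\preceq$ I will argue directly from the definition of the countable sup property. Let $S\subseteq E_1\times E_2$ have $\sup S=(u_1,u_2)$ and write $S_1=\{x\in E_1:(x,y)\in S\text{ for some }y\}$. The key step, which I expect to be the main technical point, is showing that $u_1$ is attained by $S_1$: if no $(u_1,y)$ lies in $S$, then every $(x,y)\in S$ satisfies $x<u_1$, so by the first branch of $\preceq$ the pair $(u_1,b)$ is an upper bound of $S$ for \emph{every} $b\in E_2$; choosing any $b<u_2$ then contradicts $\sup S=(u_1,u_2)$. Having ruled this out, consider the nonempty fibre $T=\{y\in E_2:(u_1,y)\in S\}$, and use the same strict-inequality trick to show $\sup_{E_2}T=u_2$: any bound $b<u_2$ on $T$ would make $(u_1,b)$ an upper bound of all of $S$ (elements of $S$ whose first coordinate is strictly less than $u_1$ are handled by the first branch of $\preceq$, those with first coordinate equal to $u_1$ by $y\le b$), contradicting $\sup S=(u_1,u_2)$ once more.

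With $\sup_{E_2}T=u_2$ in hand, the countable sup property of $E_2$ provides a countable $T_0\subseteq T$ with $\sup T_0=u_2$, and then $\{(u_1,y):y\in T_0\}$ is a countable subset of $S$ whose supremum in $(E_1\times E_2,\preceq)$ equals $(u_1,u_2)$: any upper bound $(a,b)$ of this set satisfies $a\ge u_1$, and when $a=u_1$ also $b\ge\sup T_0=u_2$, so $(a,b)\succeq(u_1,u_2)$. Worth noting is that the countable sup property of $E_1$ is not actually invoked in the lexicographic case, since after the initial reduction everything happens in the single fibre above $u_1$.
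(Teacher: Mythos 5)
Your proof is correct, and it is worth recording where it parts ways with the paper. For the coordinatewise order you take a genuinely different route: you invoke the disjoint-system characterization of \Cref{countable sup disjoint system}, which is legitimate here because the coordinatewise product of two Archimedean vector lattices is again Archimedean, whereas the paper works directly with the net formulation, extracting cofinal increasing sequences $(\alpha_n)$ and $(\beta_n)$ for the two coordinates and interleaving them into a single increasing sequence $(\gamma_n)$ with $\gamma_n>\alpha_n,\beta_n$. Your version is shorter; the paper's is more self-contained and does not lean on the Archimedean hypothesis. For the lexicographic order the two arguments share the same key insight, namely that the first coordinate $u_1$ must be attained by $S$ (otherwise $(u_1,b)$ is an upper bound for every $b\in E_2$, forcing $u_2$ to be a least element of $E_2$), after which everything happens in the fibre over $u_1$ and only the countable sup property of $E_2$ is used --- the paper phrases this with nets via $\widetilde\Lambda=\{\alpha:\; x_\alpha=x\}$, while your set-theoretic version makes the intermediate fact $\sup_{E_2}T=u_2$ explicit. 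Note also that you were right not to use \Cref{countable sup disjoint system} in the lexicographic case, since the lexicographic product is not Archimedean. Two cosmetic points: to conclude $\sup_{E_2}T=u_2$ you should let $b$ be an \emph{arbitrary} upper bound of $T$ and deduce $u_2\leq b$ from $(u_1,u_2)\preceq(u_1,b)$ --- your argument already does exactly this, since the hypothesis $b<u_2$ is never used in showing that $(u_1,b)$ bounds $S$; and both your argument and the paper's tacitly assume $E_2\neq\{0\}$, the excluded case being trivial.
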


\begin{proof}
Let $0\le (x_\alpha,y_\alpha)\uparrow (x,y)$ in $(E_1\times E_2,\leq)$.
Then $0\le x_\alpha\uparrow x$ in $E_1$ and $0\le y_\alpha\uparrow y$ in $E_2$.
Since $E_1$ and $E_2$ satisfy the countable sup property there exist
increasing sequences $(\alpha_n)$ and $(\beta_n)$ such that
$0\le x_{\alpha_n}\uparrow x$ in $E_1$ and $0\le y_{\beta_n}\uparrow y$ in $E_2$.
Choose arbitrary $\gamma_1>\alpha_1,\beta_1.$ Inductively, for each $n\in\mathbb N$ one can find $\gamma_{n+1}>\alpha_{n+1},\beta_{n+1},\gamma_n.$
Then $\gamma_n$ is an increasing sequence, $0\le x_{\gamma_n}\uparrow x$ in $E_1$, and $0\le y_{\gamma_n}\uparrow y$ in $E_2$.
 This yields $0\le (x_{\gamma_n},y_{\gamma_n})\uparrow (x,y)$ in $(E_1\times E_2,\leq)$.

Let $(x_\alpha,y_\alpha)_{\alpha\in\Lambda}$ be a net such that
$0\le (x_\alpha,y_\alpha)_{\alpha\in\Lambda}\uparrow (x,y)$ in $(E_1\times E_2,\preceq)$.
If there is no $\alpha\in\Lambda$ such that $x_\alpha=x$, then $(x_\alpha,y_\alpha)<(x,z)$ for all $\alpha$ and all $z\in E_2$.
Therefore $y$ is the first element in $E_2$  which is a contradiction. Hence, the set $\widetilde\Lambda=\{\alpha\in\Lambda :\; x_\alpha=x\}$ is nonempty.
From $0\le (x,y_\alpha)_{\alpha\in\widetilde\Lambda}\uparrow (x,y)$
we conclude $0\le y_\alpha \uparrow_{\alpha\in\widetilde\Lambda}y$.
Since $E_2$ has the countable sup property, there exists an increasing sequence
$(\alpha_n)$ in $\widetilde{\Lambda}$ such that $0\le y_{\alpha_n}\uparrow y$ in $E_2$. Finally we obtain
$0\le (x_{\alpha_n},y_{\alpha_n})\uparrow (x,y)$ in $(E_1\times E_2,\preceq)$.
\end{proof}

So the product of function spaces $C(X)$ and $C(Y)$ with the countable sup property has the same property
in both mentioned cases. The situation with the product of the form $C(X\times Y)$ is more difficult. We do not know whether $C(X\times Y)$ has the countable sup property whenever $C(X)$ and $C(Y)$ have it. It seems one needs to assume some special axioms to answer the question. This is not so surprising since the countable sup property of $C(X)$ is tightly connected to CCC of the space $X$.
If we assume the Continuum Hypothesis, there exists a compact Hausdorff space $X$ which is CCC but $X\times X$ is not (see e.g. \cite{Roy:89}). Since compact Hausdorff spaces are $T_{3\frac 1 2}$, this space $X$  by \Cref{CCC = countabe sup} serves us also as an example of a space such that $C(X)$ has the countable sup property while $C(X\times X)$ does not have it.

If we sharpen an assumption on one of the factor $X$ and $Y$, we have a positive answer.

\begin{thm}\label{csp + cal -> csp}
If $C(X)$ has the countable sup property and $Y$ has calibre $\aleph_1$, then $C(X\times Y)$ has the countable sup property.
\end{thm}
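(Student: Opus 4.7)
The plan is to reduce the statement to the characterization in Proposition \ref{C(X) C_b(X)}: it suffices to show that $X\times Y$ satisfies CCC for cozero sets. The hypothesis on $X$ gives us CCC for cozero sets in $X$, and the hypothesis on $Y$ is calibre $\aleph_1$; I want to combine them via a slicing argument on the defining continuous functions.

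So suppose, for a contradiction, that $(W_\lambda)_{\lambda\in\Lambda}$ is an uncountable family of pairwise disjoint nonempty cozero sets in $X\times Y$. For each $\lambda$, choose $F_\lambda\in C(X\times Y)$ with $\mathcal{Z}(F_\lambda)=(X\times Y)\setminus W_\lambda$ and pick a point $(x_\lambda,y_\lambda)\in W_\lambda$. The vertical slice $y\mapsto F_\lambda(x_\lambda,y)$ is continuous on $Y$, so
\[
V_\lambda:=\{y\in Y:F_\lambda(x_\lambda,y)>0\}
\]
is a nonempty cozero set of $Y$ (it contains $y_\lambda$). Then $(V_\lambda)_{\lambda\in\Lambda}$ is an uncountable family of nonempty open sets in $Y$, so calibre $\aleph_1$ produces an uncountable subset $\Lambda'\subseteq\Lambda$ and a point $y^\ast\in\bigcap_{\lambda\in\Lambda'}V_\lambda$.

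Now I switch to horizontal slices at the height $y^\ast$. For each $\lambda\in\Lambda'$ set
\[
U_\lambda:=\{x\in X:F_\lambda(x,y^\ast)>0\},
\]
which is a nonempty cozero set in $X$ (nonempty because $y^\ast\in V_\lambda$ forces $F_\lambda(x_\lambda,y^\ast)>0$, hence $x_\lambda\in U_\lambda$). The crucial point is pairwise disjointness: if $x\in U_\lambda\cap U_{\lambda'}$ for distinct $\lambda,\lambda'\in\Lambda'$, then $(x,y^\ast)$ lies in both $W_\lambda$ and $W_{\lambda'}$, contradicting the disjointness of the $W_\lambda$'s. Thus $(U_\lambda)_{\lambda\in\Lambda'}$ is an uncountable pairwise disjoint family of nonempty cozero sets in $X$, contradicting that $X$ satisfies CCC for cozero sets (by Proposition \ref{C(X) C_b(X)} applied to the hypothesis on $C(X)$). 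Hence $X\times Y$ satisfies CCC for cozero sets, and another application of Proposition \ref{C(X) C_b(X)} yields the countable sup property of $C(X\times Y)$.

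The only subtle point I anticipate is making sure that I truly get cozero subsets of $X$ and $Y$ (not merely open sets) so that the conclusion of Proposition \ref{C(X) C_b(X)} can be invoked in both factors; but this is automatic because continuous slices of a continuous function are continuous, so the superlevel sets $\{F_\lambda(x_\lambda,\cdot)>0\}$ and $\{F_\lambda(\cdot,y^\ast)>0\}$ are cozero by construction. No separation axiom on $X$ or $Y$ is needed.
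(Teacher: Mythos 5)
Your proof is correct and follows essentially the same route as the paper's: apply calibre $\aleph_1$ to the $Y$-parts of the disjoint family to obtain an uncountable subfamily and a common point $y^\ast$, then restrict to the slice $X\times\{y^\ast\}$ to produce an uncountable disjoint family in $X$ and reach a contradiction; the only cosmetic differences are that you phrase everything in terms of disjoint cozero sets via \Cref{C(X) C_b(X)} (the paper works directly with disjoint systems of functions) and that you slice through a chosen point rather than through a basic open box inside the support. One trivial fix: replace $F_\lambda$ by $|F_\lambda|$ at the outset, since a cozero-set witness need not be nonnegative, so that the sets $\{F_\lambda(x_\lambda,\cdot)>0\}$ and $\{F_\lambda(\cdot,y^\ast)>0\}$ really do capture membership in $W_\lambda$.
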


\begin{proof}
Suppose there exists an uncountable family $\{f_\lambda\colon X\times Y\to [0,1]:\; \lambda\in\Lambda\}$ of pairwise disjoint functions in $C(X\times Y)$.
For every $\lambda$ there exists an open set $U_\lambda\times V_\lambda\subset f_\lambda^{-1}((0,1])$.
Because $Y$ has calibre $\aleph_1$ there exist an uncountable subset $\Lambda'\subset\Lambda$ and $y_0\in Y$
such that $y_0\in\bigcap_{\lambda\in\Lambda'} V_\lambda$. Let us define $g_\lambda(x)=f_\lambda(x,y_0)$.
Then $\{g_\lambda :\; \lambda\in\Lambda'\}$ is an uncountable family of nonzero pairwise disjoint functions in $C(X)$
which is a contradiction. Hence $C(X\times Y)$ has the countable sup property.
\end{proof}

Since every separable space has calibre $\aleph_1$, the following corollary can be seen as a vector lattice version of \cite[Theorem 3.3]{Wiscamb:69}.

\begin{cor}\label{ccc + sep -> csp}
If $C(X)$ has the countable sup property and $Y$ is separable, then $C(X\times Y)$ has the countable sup property.
\end{cor}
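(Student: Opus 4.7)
The statement is an almost immediate consequence of \Cref{csp + cal -> csp}. The plan is to reduce the corollary to that theorem by verifying the implication
\[
\text{$Y$ separable}\ \Longrightarrow\ \text{$Y$ has calibre $\aleph_1$,}
\]
which is the only missing link and is noted in the prose preceding the statement.

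First I would fix a countable dense subset $D=\{d_n:\; n\in\mathbb N\}$ of $Y$ and an uncountable family $\{V_\lambda:\; \lambda\in\Lambda\}$ of nonempty open subsets of $Y$. Since $D$ is dense, each $V_\lambda$ meets $D$, so one can define $\Lambda_n=\{\lambda\in\Lambda:\; d_n\in V_\lambda\}$ and obtain $\Lambda=\bigcup_{n\in\mathbb N}\Lambda_n$. Since $\Lambda$ is uncountable, the pigeonhole principle supplies some $n_0$ with $\Lambda_{n_0}$ uncountable, and then $d_{n_0}\in\bigcap_{\lambda\in\Lambda_{n_0}}V_\lambda$, witnessing calibre $\aleph_1$.

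With this observation, the corollary follows: given that $C(X)$ has the countable sup property and $Y$ is separable, $Y$ has calibre $\aleph_1$, and \Cref{csp + cal -> csp} yields that $C(X\times Y)$ has the countable sup property. There is no real obstacle here; the work is entirely in \Cref{csp + cal -> csp}, and the corollary is a one-line deduction combined with the elementary density argument above.
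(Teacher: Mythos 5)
Your proposal is correct and matches the paper's own derivation: the paper likewise obtains the corollary as an immediate consequence of \Cref{csp + cal -> csp} together with the standard fact that every separable space has calibre $\aleph_1$ (which the paper states without proof in the sentence preceding the corollary). Your pigeonhole argument for that implication is a correct spelling-out of the same step.
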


It is known that the product of a CCC space and a space satisfying Knaster's condition is a CCC space. In particular, if $X$ is CCC and $Y$ satisfies Knaster's condition, then $C(X\times Y)$ has the countable sup property.
However, we do not know if $C(X\times Y)$ has the countable sup property provided $C(X)$ has the countable sup property and $Y$ satisfies Knaster's condition.

The following theorem is a vector lattice version of \cite[Theorem 2.2]{Roy:89}.

\begin{thm}\label{koncni produkti cs}
Let $(X_\lambda)_{\lambda\in\Lambda}$ be a family of topological spaces such that  for each finite family $\Lambda_0\subseteq \Lambda$ the space  $C(\prod_{\lambda\in\Lambda_0} X_\lambda)$ has the countable sup property, then
$C(\prod_{\lambda\in\Lambda} X_\lambda)$ has the countable sup property.
\end{thm}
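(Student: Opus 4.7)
The plan is to verify the contrapositive formulation offered by \Cref{countable sup disjoint system}: I would take an order-bounded disjoint system $(f_i)_{i\in I}$ of nonzero positive functions in $C(X)$, where $X=\prod_{\lambda\in\Lambda}X_\lambda$ and $0\leq f_i\leq f$ for some $f\in C(X)_+$, and show that $I$ must be countable. For each index $i$ I would first pick $x^i\in X$ with $f_i(x^i)>0$, and then, using continuity of $f_i$ and the product topology, extract a basic open neighborhood $B_i=\bigcap_{\lambda\in F_i}\pi_\lambda^{-1}(W_\lambda^i)$ of $x^i$ on which $f_i$ remains strictly positive; here each $F_i\subseteq \Lambda$ is finite and each $W_\lambda^i$ is a nonempty open subset of $X_\lambda$.

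Supposing $I$ is uncountable, I would apply Shanin's $\Delta$-system lemma to the uncountable family of finite sets $(F_i)_{i\in I}$, passing to an uncountable $I'\subseteq I$ such that $\{F_i:i\in I'\}$ is a $\Delta$-system with common root $R\subseteq \Lambda$. The case $R=\emptyset$ is immediately excluded: then for $i\neq j$ in $I'$ the sets $F_i,F_j$ would be disjoint, making $B_i\cap B_j$ a product of nonempty factors, hence nonempty, so $f_i$ and $f_j$ would both be strictly positive at a common point, contradicting $f_i\wedge f_j=0$. Thus $R$ is a nonempty finite subset of $\Lambda$, and the hypothesis guarantees that $C(X_R)$, with $X_R=\prod_{\lambda\in R}X_\lambda$, has the countable sup property.

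The final step is to slice. I would exploit the disjointness of the sets $F_i\setminus R$ across $i\in I'$, guaranteed by the $\Delta$-system, to choose a single point $q\in\prod_{\lambda\notin R}X_\lambda$ with $q_\lambda\in W_\lambda^i$ whenever $\lambda\in F_i\setminus R$ (which depends on a unique $i$, so no conflict arises), and $q_\lambda$ chosen arbitrarily in $X_\lambda$ for the remaining coordinates. The functions $g_i(y):=f_i(y,q)$ then form an order-bounded disjoint family in $C(X_R)_+$: boundedness by $f(\cdot,q)$ is immediate, and disjointness follows by evaluating $f_i\wedge f_j=0$ at the single point $(y,q)$. Each $g_i$ is nonzero because any $y\in\prod_{\lambda\in R}W_\lambda^i$ satisfies $(y,q)\in B_i$ by construction of $q$, so $f_i(y,q)>0$. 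Applying \Cref{countable sup disjoint system} to $C(X_R)$ then yields the desired contradiction. The hard part is precisely the synthesis of a single slice point $q$ that works for \emph{all} $i\in I'$ simultaneously: a naïve choice would confront incompatible constraints $q_\lambda\in W_\lambda^i\cap W_\lambda^j$ at common coordinates, and the $\Delta$-system lemma is the combinatorial device that removes these conflicts by confining the binding constraints to pairwise disjoint sets of coordinates outside the finite root $R$.
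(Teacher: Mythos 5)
Your proposal is correct and follows essentially the same route as the paper's proof: both extract basic open boxes on which each $f_i$ is positive, apply the $\Delta$-system lemma to the finite supports, rule out an empty root via disjointness of the functions, and then slice along a single point chosen coordinatewise using the pairwise disjointness of the sets $F_i\setminus R$ to produce an uncountable disjoint family in $C(\prod_{\lambda\in R}X_\lambda)$. The only cosmetic difference is that you phrase the reduction via \Cref{countable sup disjoint system} with a general order bound $f$, whereas the paper normalizes to $[0,1]$-valued functions.
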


\begin{proof}
Suppose there exists an uncountable family of pairwise disjoint functions
$\{f_\alpha\colon \prod_{\lambda\in\Lambda} X_\lambda\to [0,1]:\; \alpha\in A\}$
in $C(\prod_{\lambda\in\Lambda} X_\lambda)$.
For every $\alpha\in A$ there is a basis set $U_\alpha\subset f_\alpha^{-1}( (0,1])$, i.e.,
there exists a finite set $\Lambda_\alpha\subset \Lambda$ and for every $\lambda\in \Lambda_\alpha$
there exists an open set $V_\lambda^\alpha\subset X_\lambda$ such that
$U_\alpha=\bigcap_{\lambda\in \Lambda_\alpha} p_\lambda^{-1}(V_\lambda^\alpha)$,
where $p_\lambda\colon \prod_{\mu\in\Lambda} X_\mu\to X_\lambda$ is the projection.
By delta system lemma \cite[p.~174]{Roy:89} there exists an uncountable subset $\widetilde A\subseteq A$
and a set $\Lambda_0\subset\Lambda$ such that
$\Lambda_\alpha\cap \Lambda_\beta=\Lambda_0$ for all $\alpha,\beta\in\widetilde A$
for which $\Lambda_\alpha\ne \Lambda_\beta$. If $\Lambda_\alpha\cap \Lambda_\beta=\emptyset$
for $\alpha,\beta\in\widetilde A$ the functions
$f_\alpha$ and $f_\beta$ are not disjoint, hence $\Lambda_0\ne\emptyset$.
For every $\lambda\not\in\Lambda_0$ there is at most one $\alpha\in\widetilde A$ such that $\lambda\in \Lambda_\alpha$.
Pick an element $x_\lambda\in V_\lambda^\alpha$ if there is $\alpha\in\widetilde A$ such that $\lambda\in \Lambda_\alpha$
otherwise pick any $x_\lambda\in X_\lambda$. Let
$i\colon \prod_{\lambda\in \Lambda_0} X_\lambda\to \prod_{\lambda\in\Lambda} X_\lambda$ be the slice embedding
such that $p_\lambda i(x)=x_\lambda$ for every $\lambda\not\in\Lambda_0$.
Then $f_\alpha\circ i$ is nonzero for every $\alpha\in\widetilde A$ and
$\{f_\alpha\circ i:\; \alpha\in\widetilde A\}$ is an uncountable family of pairwise disjoint functions
in $C(\prod_{\lambda\in \Lambda_0} X_\lambda)$ which is a contradiction.
\end{proof}

\begin{cor}
Let $(X_\lambda)_{\lambda\in\Lambda}$ be a family such that $C(X_{\lambda_0})$ has the countable sup property for some $\lambda_0\in\Lambda$
and $X_\lambda$ has calibre $\aleph_1$ for all $\lambda\in\Lambda\setminus\{\lambda_0\}$. Then $C({\prod_{\lambda\in\Lambda} X_\lambda})$ has the countable sup property.
\end{cor}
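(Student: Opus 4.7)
The plan is to invoke \Cref{koncni produkti cs}: it suffices to show that for every finite $\Lambda_0\subseteq\Lambda$ the lattice $C(\prod_{\lambda\in\Lambda_0}X_\lambda)$ has the countable sup property. The argument will split according to whether $\lambda_0\in\Lambda_0$, and both cases will rest on a preliminary observation, namely that a finite product of spaces with calibre $\aleph_1$ again has calibre $\aleph_1$.

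To establish this preliminary observation I would induct on the number of factors; the only interesting step is two factors. Given calibre $\aleph_1$ spaces $X$ and $Y$ and an uncountable family $\{W_\alpha:\alpha<\omega_1\}$ of nonempty open sets in $X\times Y$, shrink each $W_\alpha$ to a basic rectangle $U_\alpha\times V_\alpha$. Calibre $\aleph_1$ in $X$ applied to $\{U_\alpha\}$ yields an uncountable $A\subseteq\omega_1$ with $\bigcap_{\alpha\in A}U_\alpha\neq\emptyset$; calibre $\aleph_1$ in $Y$ applied to $\{V_\alpha\}_{\alpha\in A}$ yields an uncountable $B\subseteq A$ with $\bigcap_{\alpha\in B}V_\alpha\neq\emptyset$. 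Any point in the product of these two intersections lies in $\bigcap_{\alpha\in B}(U_\alpha\times V_\alpha)\subseteq\bigcap_{\alpha\in B}W_\alpha$, verifying calibre $\aleph_1$ of $X\times Y$.

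With this in hand, fix a finite $\Lambda_0\subseteq\Lambda$. If $\lambda_0\notin\Lambda_0$, every factor has calibre $\aleph_1$, so $\prod_{\lambda\in\Lambda_0}X_\lambda$ has calibre $\aleph_1$ and is in particular CCC; hence $C(\prod_{\lambda\in\Lambda_0}X_\lambda)$ has the countable sup property by the remark following \Cref{C(X) C_b(X)}. If instead $\lambda_0\in\Lambda_0$, write the product as $X_{\lambda_0}\times Y$ with $Y=\prod_{\lambda\in\Lambda_0\setminus\{\lambda_0\}}X_\lambda$; then $Y$ has calibre $\aleph_1$ by the preliminary observation, and \Cref{csp + cal -> csp} gives the countable sup property of $C(X_{\lambda_0}\times Y)$.

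The only real obstacle is the finite-product stability of calibre $\aleph_1$, and the argument above is short and direct. Once it is in place, \Cref{koncni produkti cs} immediately yields that $C(\prod_{\lambda\in\Lambda}X_\lambda)$ has the countable sup property.
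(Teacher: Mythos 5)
Your proof is correct and follows essentially the same route as the paper: reduce to finite subproducts via \Cref{koncni produkti cs}, then split on whether $\lambda_0$ belongs to the finite set, using CCC (via calibre $\aleph_1$) in one case and \Cref{csp + cal -> csp} in the other. The only difference is cosmetic: you explicitly prove that finite products preserve calibre $\aleph_1$ and group all the remaining factors into a single $Y$, whereas the paper peels off one calibre-$\aleph_1$ factor at a time by induction on \Cref{csp + cal -> csp} and leaves the product-stability fact unproved; your version is slightly more self-contained.
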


\begin{proof}
Pick a finite subset $\Lambda_0\subseteq \Lambda$.
If $\lambda_0\not\in \Lambda_0$, then $\prod\limits_{\lambda\in\Lambda_0}X_\lambda$ has calibre $\aleph_1$, hence $C(\prod\limits_{\lambda\in\Lambda_0}X_\lambda)$ has the countable sup property.
If $\lambda_0\in \Lambda_0$, by induction and \Cref{csp + cal -> csp} the space $C(\prod\limits_{\lambda\in\Lambda_0}X_\lambda)$ has the countable sup property.
To finish the proof we apply \Cref{koncni produkti cs}.
\end{proof}

Of course we can replace the property ``calibre $\aleph_1$" with stronger property ``separability" in the above Corollary
to get the same result.

\end{document}